
\documentclass[12pt,reqno]{amsart}
\usepackage{latexsym,amsmath,amsfonts,amscd,amssymb}
\usepackage{graphics,color}
\usepackage[utf8]{inputenc}

\usepackage{tikz}
\usepackage{dsfont}
\usetikzlibrary{matrix}
\usepackage{enumitem}

\setlength{\oddsidemargin}{15pt} \setlength{\evensidemargin}{15pt}
\setlength{\textwidth}{420pt} \setlength{\textheight}{630pt}
\setlength{\topmargin}{0pt}

%\usepackage[T1]{fontenc}
%\usepackage{avant}
%\usepackage[cp1250]{inputenc}
%\usepackage{amsmath,amssymb,amsthm}
%\usepackage{amscd}
%\usepackage{etoolbox}
%\usepackage{lipsum}
%\usepackage{tikz}
%%\usepackage{authbkl}
%
%\usetikzlibrary{matrix}
\renewcommand{\a}{\alpha}
\renewcommand{\b}{\beta}

\newcommand{\cF}{\mathcal{F}}

\newcommand{\cT}{\mathcal{T}}

\newcommand{\QQ}{\mathbb{Q}}

\newcommand{\NN}{\mathbb{N}}

\newcommand{\CC}{\mathbb{C}}

\newcommand{\ZZ}{\mathbb{Z}}

\newcommand{\bt}{\mathbf{t}}

\newcommand{\la}{\langle}
\newcommand{\ra}{\rangle}

\DeclareMathOperator{\lcm}{lcm}

\newcommand{\CP}{\CC P}

\newcommand{\orb}{\mathrm{orb}}

\theoremstyle{plain}
\newtheorem{proposition}{Proposition}

\newtheorem{theorem}[proposition]{Theorem}
\newtheorem{lemma}[proposition]{Lemma}
\newtheorem{corollary}[proposition]{Corollary}
\theoremstyle{definition}
\newtheorem{definition}[proposition]{Definition}

\theoremstyle{remark}
\newtheorem{remark}{Remark}
\newtheorem{question}{Question}

\begin{document}

\title[On the classification of Sasakian Smale-Barden manifolds]{On the classification of Smale-Barden manifolds with Sasakian structures}
\author[V. Mu\~{n}oz]{Vicente Mu\~{n}oz}
\address{Departamento de \'Algebra, Geometr\'{\i}a y Topolog\'{\i}a, Universidad de M\'alaga, 
Campus de Teatinos, s/n, 29071 M\'alaga, Spain}
\email{vicente.munoz@uma.es}
\author[A. Tralle]{Aleksy Tralle}
\address{Faculty of Mathematics and Computer Science, University of Warmia and Mazury, S\l\/oneczna 54, 10-710 Olsztyn, Poland}
\email{tralle@matman.uwm.edu.pl}
\date{\today}

\maketitle

\begin{abstract} 
Smale-Barden manifolds $M$ are classified by their second homology $H_2(M,\ZZ)$ and the Barden invariant $i(M)$. It is an important and dificult question to decide when $M$ admits a Sasakian structure in terms of these data. In this work we show  methods of doing this. In particular we realize all $M$ with $H_2(M)=\ZZ^k\oplus(\oplus_{i=1}^r\ZZ_{m_i}^{2g_i})$ and $i=0,\infty,$ provided that $k\geq 1$, $m_i\geq 2, g_i\geq 1$, $m_i$ are pairwise coprime. Using our methods we also contribute to the problem of the existence of definite Sasakian structures 
%and find new obstructions to the existence of Sasakian structures 
on rational homology spheres. % which were not known up to now. 
Also, we give a complete solution to the problem of the existence of Sasakian structures on rational 
homology spheres in the class of semi-regular Sasakian structures. 
\end{abstract}

\section{introduction}\label{sec:intro}
Consider a contact co-oriented manifold $(M,\eta)$ with a contact form $\eta$. We say that $(M,\eta)$ admits a {\it Sasakian structure} $(M,g,\xi,\eta,J)$ if:
\begin{itemize}[leftmargin=.2in]
\item there exists an endomorphism $J:TM\rightarrow TM$ such that 
 $J^2=-\operatorname{Id}+\xi\otimes\eta$,
for the Reeb vector field $\xi$ of $\eta$;
\item $J$ satisfies the conditions
$d\eta(JX,JY)=d\eta(X,Y)$,
for all vector fields $X,Y$ and $d\eta(JX,X)>0$ for all non-zero $X\in\ker\eta$;
\item the Reeb vector field $\xi$ is Killing with respect to the Riemannian metric 
$g(X,Y)=d\eta(JX,Y)+\eta(X)\eta(Y),$
\item the almost complex structure $I$ on the contact cone 
$C(M)=(M\times\mathbb{R}_{+},t^2g+dt^2)$
 defined by the formulas
$I(X)=J(X),X\in\ker \eta, I(\xi)=t{\partial\over\partial t},I\left(t{\partial\over\partial t}\right)=-\xi$,
 is integrable.
\end{itemize}

If one drops the condition of the integrability of  $I$, one obtains a {\it K-contact} structure.

The study of manifolds with Sasakian and, more generally, K-contact structures is an important subject, because it interrelates to other geometries as well as brings together several different fields of mathematics from algebraic topology through complex algebraic geometry to Riemannian manifolds with special holonomy. One of the important motivations to study Sasakian geometry is the fact that it can be used to construct Einstein metrics on odd-dimensional manifolds \cite[Chapter 11]{BG}, \cite{BGK}, \cite{K2}.

A $5$-dimensional simply connected manifold $M$ is called a {\it Smale-Barden manifold}. These manifolds are classified by their second homology group over $\ZZ$ and a {\it Barden invariant} \cite{B},\cite{S}. In greater detail,
let $M$ be a compact smooth oriented simply connected $5$-manifold. 
Let us write $H_2(M,\ZZ)$ as a direct sum of cyclic group of prime  power order
  $$
  H_2(M,\ZZ)=\ZZ^k\oplus( \bigoplus_{p,i}\ZZ_{p^i}^{c(p^i)}),
  $$
where $k=b_2(M)$. Choose this decomposition in a way that the second Stiefel-Whitney class map
  $$
  w_2: H_2(M,\ZZ)\rightarrow\ZZ_2
  $$
iz zero on all but one summand $\ZZ_{2^j}$. The value of $j$ is unique, it
is denoted by $i(M)$ and is called the Barden invariant.
The classification of the  Smale-Barden manifolds is given by the following theorem.

\begin{theorem}[{\cite[Theorem 10.2.3]{BG}}]\label{thm:sb-classification} 
Any simply connected closed $5$-manifold is diffeomorphic to one of the spaces
 $$
  M_{j;k_1,...,k_s;r}=X_j\# r M_\infty \#M_{k_1}\#\cdots\#M_{k_s} 
   $$
where the manifolds $X_{-1},X_0,X_j,X_{\infty}, M_j,M_{\infty}$ are characterized as follows: $1<k_i< \infty$, $k_1|k_2|\ldots |k_s$, and
\begin{itemize}
\item $X_{-1}=SU(3)/SO(3)$, $H_2(X_{-1},\ZZ)=\ZZ_2$, $i(X_{-1})=1$,
\item $X_0=S^5$, $H_2(X_0,\ZZ)=0$, $i(X_0)=0$,
\item $X_j$, $0<j<\infty$, $H_2(X_j,\ZZ)=\ZZ_{2^j}\oplus \ZZ_{2^j}$, $i(X_j)=j$,
\item $X_{\infty}=S^2\widetilde\times S^3$, $H_2(X_{\infty},\ZZ)=\ZZ$, $i(X_\infty)=\infty$
\item $M_k$, $1<k<\infty$, $H_2(M_k,\ZZ)=\ZZ_k\oplus \ZZ_k$, $i(M_k)=0$,
\item $M_{\infty}=S^2\times S^3$, $H_2(M_{\infty},\ZZ)=\ZZ$, $i(M_{\infty})=0$.
\end{itemize}
\end{theorem}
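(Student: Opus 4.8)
The plan is to recover this as the classical Smale--Barden classification, proceeding by surgery/handle theory and organized around the homological invariants that Poincaré duality makes available. First I would record the constraints on a closed oriented simply connected $5$-manifold $M$. Simple connectivity gives $H_0=H_5=\ZZ$ and $H_1=H_4=0$; writing $H_2(M,\ZZ)=\ZZ^k\oplus T$ with $T$ finite, the universal coefficient theorem together with Poincaré duality forces $H_3\iso H^2\iso\ZZ^k$ to be free, so that all torsion is concentrated in $H_2$. On this torsion $T$ there is a nondegenerate symmetric linking form $\ell\colon T\times T\to\QQ/\ZZ$, and the class $w_2\in H^2(M,\ZZ_2)$ records whether $M$ is spin. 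The three pieces of data $(H_2,\ell,w_2)$, ultimately repackaged as $(H_2(M,\ZZ),i(M))$, are the invariants the proof must show are complete.

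The core is a handle-decomposition argument. Choosing a self-indexing Morse function and using $\pi_1(M)=0$ together with Smale's $h$-cobordism theorem, one trades and cancels handles of index $1$ and $4$, so that $M$ is built from a $0$-handle, some $2$- and $3$-handles, and a $5$-handle. The attaching data of the $2$-handles, namely their framings and mutual linking, is what produces $H_2$, the linking form, and $w_2$. In the spin case $w_2=0$ (Smale's theorem), I would show that the linking form on any finite abelian group arising this way splits as an orthogonal sum of standard cyclic pieces, each realized by the model $M_k$ with $H_2=\ZZ_k\oplus\ZZ_k$, while the free part is realized by copies of $M_\infty=S^2\times S^3$. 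The doubling $\ZZ_k\oplus\ZZ_k$ of the torsion is forced by the nondegeneracy and symmetry of $\ell$ in the spin setting.

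For the non-spin case I would follow Barden: the extra information is how $w_2$ is supported on the $2$-primary torsion, which is exactly the content of the invariant $i(M)$. The relevant models are the Wu manifold $X_{-1}=SU(3)/SO(3)$ with $H_2=\ZZ_2$ and $i=1$, the spaces $X_j$ with $H_2=\ZZ_{2^j}\oplus\ZZ_{2^j}$ and $i=j$, and the nontrivial bundle $X_\infty=S^2\widetilde\times S^3$ with $H_2=\ZZ$ and $i=\infty$; one checks directly that each has the claimed homology and Stiefel--Whitney behavior, and that connected sum adds $H_2$ while combining the $w_2$-data according to the normalization defining $i(M)$. Realization then follows by assembling the appropriate connected sum. \emph{Uniqueness} is the crux: given two manifolds with the same $(H_2,i)$, I would match their reduced handle decompositions and build an $h$-cobordism between them, concluding via the $h$-cobordism theorem (applicable since $\dim\geq 5$ and the relevant ends are simply connected) that they are diffeomorphic.

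The main obstacle I expect is precisely this uniqueness step. It requires showing that the framing and attaching data of the $2$-handles is determined up to the allowed handle moves by the algebraic invariants; in particular that the linking form, together with its quadratic refinement coming from $w_2$ and the handle framings, is a complete invariant of the torsion summand. Controlling the exceptional behavior of the $2$-primary part (the distinction between the $M_k$ and $X_j$ families, and the role of the Wu manifold), and verifying that these local pieces split off as connected summands, is the delicate part; everything else is bookkeeping once the $h$-cobordism theorem is in hand.
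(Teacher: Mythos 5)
The paper offers no proof of this statement to compare against: it is the classical Smale--Barden classification, quoted verbatim from \cite[Theorem 10.2.3]{BG} (due to Smale in the spin case and Barden in general), and is used in the paper purely as background. Your proposal must therefore stand on its own as a reconstruction of the classical argument. As such, it correctly identifies the framework (handle decompositions, cancellation of $1$- and $4$-handles using $\pi_1=0$ and the $h$-cobordism theorem, the invariants $H_2$, linking form, $w_2$), but it contains a genuine mathematical error and defers, rather than supplies, the actual content.

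The error: you claim the linking form $\ell$ on the torsion of $H_2$ is \emph{symmetric}, and that ``nondegeneracy and symmetry'' force the doubling $T\cong A\oplus A$. For a closed oriented $5$-manifold the linking form on the torsion of $H_2$ is \emph{skew}-symmetric, $\ell(x,y)=-\ell(y,x)$, and it is precisely the skew-symmetry that (almost) forces doubling; a nondegenerate \emph{symmetric} linking form on a finite group forces nothing of the sort (lens spaces have cyclic $H_1$ with nondegenerate symmetric linking form). Moreover, the skew-symmetric case has an exception exactly at the prime $2$: on $\ZZ_2$ skew equals symmetric, which is why the Wu manifold $X_{-1}$ with $H_2=\ZZ_2$ --- not of the form $A\oplus A$ --- appears in the non-spin list. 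Taken literally, your argument would either wrongly exclude $X_{-1}$ or wrongly permit undoubled cyclic torsion in the spin case; the interaction between this exceptional $\ZZ_2$ summand and $w_2$ is the heart of Barden's contribution, and your sketch has no mechanism to see it. Second, the uniqueness step, which you yourself flag as the crux, is not an argument but a deferral: ``match their reduced handle decompositions and build an $h$-cobordism'' is exactly what must be proved. One needs to show that any isomorphism $H_2(M)\to H_2(M')$ preserving the linking form and the $w_2$-data can be realized geometrically (by normalizing handle presentations, or by constructing a bordism that can be surgered to an $h$-cobordism); this realization statement is where all the difficulty lies, and in the non-spin case Barden needed delicate explicit constructions rather than bookkeeping. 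So the proposal correctly names the invariants and the model manifolds, but it does not establish that the invariants are complete, which is the theorem.
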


It is an important open problem to describe the class of Smale-Barden manifolds which admit Sasakian structures (see \cite[Chapter 10]{BG}). 
Some necessary conditions for a Smale-Barden manifold to carry a Sasakian structure are known.
 
\begin{definition} 
Let $M$ be a Smale-Barden manifold. We say that $M$ satisfies the {\it condition G-K} if the pair 
$(H_2(M,\ZZ),i(M))$ written in the form
 $$
 H_2(M,\ZZ)=\ZZ^k\oplus(\bigoplus_{p,i}(\ZZ_{p^i}^{c(p^i)}),
 $$
where $k=b_2(M)$, satisfies all of the following:
\begin{enumerate}
\item $i(M)\in\{0,\infty\}$,
\item for every prime $p$, $t(p):=\#\{i:c(p^i)>0\}\leq k+1$,
\item if $i(M)=\infty$, then $t(2):=\#\{i:c(2^i)>0\}\leq k$.
\end{enumerate}
\end{definition}

\begin{question}[{\cite[Question 10.2.1]{BG}}]  
Suppose that a Smale-Barden manifold satisfies the conditon G-K. Does it admit a Sasakian structure?
\end{question}

It is known from \cite[Corollary 10.2.11]{BG} that the condition G-K is necessary for the existence of 
K-contact, and hence Sasakian structures. However, it is not known to what extent it is sufficient. 
K\'ollar \cite{K} have found subtle obstructions to the existence of Sasakian structures on Smale-Barden manifolds with
$k=0$. Also, the recent works  \cite{CMRV} and \cite{MRT} showed the existence of \emph{homology Smale-Barden} 
and true Smale-Barden manifolds which carry a K-contact but do not carry any \emph{semi-regular} Sasakian structure
(see section \ref{sec:seifert} for definition). 
Therefore, finding sufficient conditions is an important problem.

As the condition G-K is a bound on the numbers $t(p)$
controlling the size of the torsion part at primes $p$, it is natural to start by focusing on the case of small values of $t(p)$. Let
 $$
  \bt(X)=\max\{ t(p)|\,p \text{  prime}\}.
  $$
The case of $\bt =0$ is that of the torsion-free Smale-Barden manifolds, where
we only have regular Sasakian structures (see section \ref{sec:bt=0}). The next case 
to analyse is $\bt = 1$. In this direction, we prove the following characterization of Sasakian structures with $\bt=1$.

\begin{theorem}\label{thm:main1} 
Let $M$ be a Smale-Barden manifold whose second integral homology has the form
 $$
 H_2(M,\ZZ)=\ZZ^k\oplus (\bigoplus_{i=1}^r\ZZ_{m_i}^{2g_i}),
 $$
and with $i(M)=0,\infty$.
Assume that $k\geq 1$, $m_i\geq 2, g_i\geq 1$, with $m_i$ pairwise coprime, $1\leq i\leq r$. 
Then $M$ admits a semi-regular Sasakian structure.
\end{theorem}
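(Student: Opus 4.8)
The plan is to realize $M$ as the total space of a Seifert $S^1$-bundle over a compact K\"ahler cyclic orbifold surface, exploiting the Boyer--Galicki correspondence \cite{BG}: a positive Seifert bundle over a projective K\"ahler orbifold carries a Sasakian structure, and this structure is \emph{semi-regular} precisely when the orbifold singular locus is supported on a normal-crossing configuration of smooth curves. Thus the task reduces to producing a smooth simply connected projective surface $X$, smooth curves $D_1,\dots,D_r\subset X$ with $D_i$ of genus $g_i$, and an orbifold polarization, so that the branch divisor $\Delta=\sum_i\bigl(1-\tfrac{1}{m_i}\bigr)D_i$ together with the Seifert data yields a total space with $H_2(M,\ZZ)=\ZZ^k\oplus\bigl(\bigoplus_i\ZZ_{m_i}^{2g_i}\bigr)$ and the prescribed Barden invariant.

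First I would fix the base surface. Take $X$ simply connected and rational with $b_2(X)=k+1$; since $k\ge 1$ this is possible, e.g.\ $X=\CP^1\times\CP^1$ blown up at $k-1$ points. On $X$ choose the $D_i$ to be smooth curves of genus $g_i$, which is achievable by the adjunction formula (for instance a smooth curve of bidegree $(2,g_i+1)$ on the quadric $\CP^1\times\CP^1$ has genus $g_i$), arranged so that any two of them meet transversally. Here the hypothesis that the $m_i$ are \emph{pairwise coprime} is decisive: at a transverse intersection of $D_i$ and $D_j$ the local uniformizing group is $\ZZ_{m_i}\times\ZZ_{m_j}\cong\ZZ_{m_im_j}$, which is cyclic, so $(X,\Delta)$ remains a cyclic orbifold and the resulting Sasakian structure is semi-regular.

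Next I would fix the Seifert data by choosing the orbifold first Chern class $c_1^\orb$ to be positive (so that, after a choice of orbifold K\"ahler form in that class, $X$ is a polarized K\"ahler orbifold and $M$ inherits a Sasakian structure) and primitive modulo torsion, with local invariants coprime to the $m_i$ so that $\pi_1(M)=1$ (possible since $X$ is simply connected and the $m_i$ are coprime). Computing $H_2(M,\ZZ)$ by the standard Seifert-bundle formula \cite{K,BG}: the free rank equals $b_2(X)-1=k$ (the Euler class kills one rational generator), while each branch curve $D_i$ of multiplicity $m_i$ contributes the torsion $H_1(D_i;\ZZ)\otimes\ZZ_{m_i}=\ZZ_{m_i}^{2g_i}$. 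Because the $m_i$ are pairwise coprime these contributions are independent and the transverse crossings produce no further torsion, giving the torsion subgroup exactly $\bigoplus_i\ZZ_{m_i}^{2g_i}$. Finally the two values $i(M)=0$ and $i(M)=\infty$ correspond to $M$ being spin or not; since $w_2(M)=f^*w_2(X)$ vanishes iff $w_2(X)\equiv c_1^\orb\pmod 2$, adjusting the parity of the chosen polarization (by even ample classes, which does not affect the homology above) realizes both invariants.

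The main obstacle is the homology bookkeeping of the previous paragraph: verifying simultaneously that $M$ is simply connected, that the free rank is exactly $k$, and that the torsion is exactly $\bigoplus_i\ZZ_{m_i}^{2g_i}$ with \emph{no} spurious contribution from the transverse intersection points. This is where all the hypotheses enter — $k\ge 1$ both guarantees a base with $b_2(X)\ge 2$ and keeps us away from the rational-homology-sphere obstructions of \cite{K}, the pairwise coprimality controls the local model $\ZZ_{m_i}\times\ZZ_{m_j}$ at crossings, and $g_i\ge 1$ makes each torsion block non-trivial. Carrying out this computation carefully, via a choice of $c_1^\orb$ in the orbifold N\'eron--Severi lattice modulo the torsion generated by the $D_i$, is the technical heart of the argument.
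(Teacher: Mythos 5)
Your overall strategy (realize $M$ as a Seifert bundle over a smooth K\"ahler orbifold surface with smooth transverse isotropy curves, then apply the Koll\'ar-type homology criterion) is indeed the approach of the paper, but your proposal defers exactly the step where the difficulty lies, and the single-surface plan as stated fails in two concrete cases. First, the surjectivity of $H^2(X,\ZZ)\to\bigoplus_i H^2(D_i,\ZZ_{m_i})$ (condition (2) of Theorem \ref{thm:Kollar}, needed for $H_1(M,\ZZ)=0$) requires the divisibility of $[D_i]$ in $H^2(X,\ZZ)$ to be coprime to $m_i$. On $\CP^1\times\CP^1$ your curve of bidegree $(2,g_i+1)$ has divisibility $2$ whenever $g_i$ is odd (and for $g_i=1$ there is no alternative: genus $1$ forces bidegree $(2,2)$), so whenever some $m_i$ is even and the corresponding $g_i$ is odd your construction gives $H_1(M,\ZZ)\neq 0$, i.e.\ $M$ is not simply connected. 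This is precisely why the paper runs a second construction (Theorem \ref{thm:sums2}) on $X=\CP^2\#\overline{\CP}{}^2$, where the genus-$g_i$ curves in the class $d_iH-(d_i-2)E$, $g_i=d_i-2$, are primitive exactly when $g_i$ is odd; the paper's whole proof is a parity case analysis matching each pair $(m_i,g_i)$ to the right surface.

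Second, your claim that both values of the Barden invariant can be reached ``by adjusting the parity of the chosen polarization'' is false on your base. When all $m_i$ are odd, formula (\ref{eqn:w2}) gives $w_2(M)=\pi^*w_2(X)+\sum_i(m_i-1)[E_i]=\pi^*w_2(X)$, so over the spin surface $\CP^1\times\CP^1$ (your base when $k=1$) every resulting $M$ is spin, no matter which polarization is chosen: the case $i(M)=\infty$ with all $m_i$ odd is unreachable. Moreover, adding an even ample class changes no mod-$2$ data, while changing $c_1(B)$ by an odd class interferes with the primitivity condition (3) of Theorem \ref{thm:Kollar}, so polarization choices are not free of homological consequences; the paper resolves this tension by explicit simultaneous choices of $\b_1,\b_2,b_i$ in each parity case, and it handles $k>1$ not with a larger base but by blowing up the $k=1$ orbifold (Theorem \ref{thm:highrank}), where the interaction of the exceptional class with primitivity and $w_2$ again has to be checked. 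Since you explicitly defer this ``homology bookkeeping'' as the technical heart of the argument, and since it is exactly where the hypotheses and the two-surface case division enter, the proposal has a genuine gap.
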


That is, all Smale-Barden manifols with $k\geq 1$ and $\bt=1$ admit Sasakian structures, and moreover they admit
semi-regular Sasakian structures.

In this article we fully solve the existence problem for semi-regular Sasakian structures on rational homology spheres. The general question was discussed in \cite{BG} (compare Corollary 10.2.15 therein). Here is the result.

\begin{theorem}\label{thm:sphere-semi}
Denote by $\mathcal{T}=\{\frac12 r(r-1)| r\geq 2\}$ the set of triangular numbers. 
Let $m_i\geq 2$ be  pairwise coprime, and $g_i=\frac12 (d_i-1)(d_i-2)\in \mathcal{T}$. 
Assume further that $\gcd(m_i,d_i)=1$ for all $i$.  
Let $M$ be a Smale-Barden manifold
with $H_2(M,\ZZ)= \bigoplus_{i=1}^r \ZZ_{m_i}^{2g_i}$ and spin. Then $M$ admits a semi-regular 
Sasakian structure.

Conversely, any Smale-Barden manifold with $b_2=0$ that admits a semi-regular Sasakian structure 
is of this form.
\end{theorem}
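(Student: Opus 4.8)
The plan is to exploit the correspondence, due to Boyer--Galicki and Koll\'ar, between (semi-regular) Sasakian structures on a simply connected $5$-manifold $M$ and Seifert $S^1$-bundles $\pi\colon M\to X$ whose base $X$ is a projective K\"ahler orbifold of complex dimension $2$; semi-regularity is precisely the condition that the local uniformizing groups act by pseudo-reflections, so that the coarse space $\bar X$ is a \emph{smooth} projective surface and the orbifold locus is a union of smooth branch divisors $D_i$ carrying multiplicities $m_i$. The computational input is Koll\'ar's description of the homology of such a bundle: once the orbifold Euler class is non-torsion one has $b_2(M)=b_2(\bar X)-1$, while the torsion is read off from the branch data as $H_2(M,\ZZ)_{\mathrm{tors}}\cong\bigoplus_i H_1(D_i,\ZZ)\otimes\ZZ_{m_i}=\bigoplus_i\ZZ_{m_i}^{2g(D_i)}$, so in particular every cyclic summand occurs with even exponent.

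For the converse I would start from a semi-regular Sasakian structure on $M$ with $b_2(M)=0$ and pass to the quotient $\bar X$. The Seifert sequence $\ZZ\to\pi_1(M)\to\pi_1^{\orb}(X)\to 1$ together with $\pi_1(M)=1$ gives $\pi_1^{\orb}(X)=1$, and since $\pi_1(\bar X)$ is a further quotient of $\pi_1^{\orb}(X)$ the coarse space $\bar X$ is simply connected. The orbifold Gysin sequence and $b_2(M)=0$ force $b_2(\bar X)=1$. A simply connected smooth projective surface with $b_2=1$ is $\PP^2$ (by the Enriques--Kodaira classification it is minimal rational, and Noether's formula pins down the invariants of $\PP^2$), so $\bar X=\PP^2$ and each branch divisor $D_i$ is a smooth plane curve of some degree $d_i$; here semi-regularity is exactly what guarantees the $D_i$ are smooth, since a singular point of a branch curve would produce a non-reflection local group. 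A smooth plane curve of degree $d_i$ has genus $\tfrac12(d_i-1)(d_i-2)\in\mathcal{T}$, so $g_i\in\mathcal{T}$, and Koll\'ar's formula then gives $H_2(M,\ZZ)=\bigoplus_i\ZZ_{m_i}^{2g_i}$. Finally, translating $\pi_1(M)=1$ back through the Seifert invariants forces the $m_i$ to be pairwise coprime and $\gcd(m_i,d_i)=1$, while the same orbifold data compute $w_2(M)$, and these arithmetic constraints force $w_2(M)=0$, so $M$ is spin; this is exactly the asserted form.

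For the existence direction I would reverse the construction. Take $\bar X=\PP^2$ and smooth curves $D_i\subset\PP^2$ of degree $d_i$ realizing $g_i=\tfrac12(d_i-1)(d_i-2)$, chosen in general position so that all intersections are transverse; since the $m_i$ are pairwise coprime, at an intersection point the local group is a product of coprime cyclic reflections, the coarse space remains smooth, and the resulting structure is genuinely semi-regular. Endow the orbifold $(\PP^2,\sum_i(1-\tfrac1{m_i})D_i)$ with a positive orbifold line bundle and take $M$ to be the associated unit circle bundle; as $\PP^2$ is projective, the Boyer--Galicki construction endows $M$ with a Sasakian structure. It then remains to choose the Seifert invariants (the orbifold first Chern class and the residues $\beta_i$ coprime to $m_i$) so that simultaneously the Euler class is non-torsion, $M$ is simply connected, and $w_2(M)=0$; Koll\'ar's formula will then yield $H_2(M,\ZZ)=\bigoplus_i\ZZ_{m_i}^{2g_i}$.

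The main obstacle is the arithmetic in this last step: one must verify that the Seifert data can be solved so as to kill $\pi_1$, enforce $b_2(M)=0$, and match the prescribed spin type all at once, without creating spurious torsion. This is precisely where the hypotheses $\gcd(m_i,d_i)=1$ and pairwise coprimality of the $m_i$ are used, since they make the relevant system of congruences solvable. The more conceptual heart of the theorem lies in the converse: the classification step forcing $\bar X=\PP^2$, after which smooth plane curves can only realize triangular genera, is what makes $\mathcal{T}$ the exact set of realizable torsion exponents.
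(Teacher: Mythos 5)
Your overall strategy is the same as the paper's: realize $M$ as a semi-regular Seifert bundle over $\CP^2$ with smooth plane curves $D_i$ of degree $d_i$ as isotropy locus, and use Koll\'ar's criterion (surjectivity of $H^2(X,\ZZ)\to\bigoplus H^2(D_i,\ZZ_{m_i})$ plus primitivity of $c_1(M/m)$) to control $H_1$ and $H_2$; your converse, forcing $X=\CP^2$ and hence triangular genera, also matches the paper. Two points in your converse are misattributed, though the conclusions are right: the pairwise coprimality of the $m_i$ is not a consequence of $\pi_1(M)=1$, but of the fact that the local groups of a Seifert $S^1$-bundle are cyclic, so at a point of $D_i\cap D_j$ the group $\ZZ_{m_i}\times\ZZ_{m_j}$ must be cyclic, combined with the fact that in $\CP^2$ any two curves intersect ($D_i\cdot D_j=d_id_j>0$). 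Likewise the spin conclusion is not an ``arithmetic constraint'': since $b_2(X)=1$, the map $\pi^*:H^2(X,\ZZ_2)\to H^2(M,\ZZ_2)$ has nontrivial kernel and is therefore identically zero, so $w_2(M)=\pi^*\bigl(w_2(X)+\sum_i b_i[D_i]+c_1(B)\bigr)=0$ automatically.

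The genuine gaps are in the existence direction, and they are exactly the two steps you defer. First, the arithmetic you call ``the main obstacle'' is the heart of the proof, and it must be exhibited: taking $c_1(B)=\b H$ one computes $c_1(M/m)=(m\b+\sum_i b_iM_id_i)H$ with $M_i=m/m_i$, and primitivity amounts to $\gcd(m,M_1d_1,\ldots,M_rd_r)=1$, which holds because any prime $p$ dividing $m$ divides some $m_i$, hence divides neither $M_i$ nor $d_i$ --- this is precisely where $\gcd(m_i,d_i)=1$ enters. One then solves $m\b+\sum_i b_iM_id_i=1$; reducing mod $m_i$ gives $b_iM_id_i\equiv 1\pmod{m_i}$, so $\gcd(b_i,m_i)=1$ comes for free, and $c_1(M/X)=\frac{1}{m}H$ is an orbifold K\"ahler class. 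Asserting that the hypotheses ``make the system solvable'' without this computation leaves the theorem unproved. Second, and more serious: no choice of Seifert invariants can ``kill $\pi_1$'' --- Koll\'ar's theorem computes $H_1(M,\ZZ)$, not $\pi_1(M)$. To upgrade $H_1(M,\ZZ)=0$ to $\pi_1(M)=1$ one must first know that $\pi_1(M)$ is abelian; the paper gets this from Nori's theorem, which gives that $\pi_1(\CP^2-(D_1\cup\cdots\cup D_r))$ is abelian for smooth transverse curves with $D_i^2>0$, whence $\pi_1^{\orb}(X)$, and then $\pi_1(M)$ via the exact sequence $\ZZ\to\pi_1(M)\to\pi_1^{\orb}(X)$, are abelian. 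This ingredient is absent from your outline and cannot be supplied by adjusting the orbifold Chern class or the residues $b_i$.
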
 

Our approach yields new obstructions to the existence of semi-regular Sasakian structures, which are not detected by K\'ollar's work. 
Recall that by \cite{K}, for a homology sphere $M$ whose second homology is of the form
 $$
  H_2(M,\ZZ)=\bigoplus_{p,j} \ZZ_{p^j}^{c(p^j)},
  $$
the existence of a Sasakian structure implies the following restriction on the set of $c(p^j)$:  
all but at most $10$ elements of the set $\{g_i = \frac12 c(p^j)\}$ are in $\cT$. 

Over this, we get an interesting  coprimality condition between the order of the torsion and its size (exponent):
since $\gcd(d_i,m_i)=\gcd(d_i,p^j)=1$, where $c(p^j)=2g_i=(d_i-1)(d_i-2)$, we have
 $$
 c(p^j)\in  \cT^*_p:=\cT- \left\{ \frac12 (r-1)(r-2)\, |\, r\equiv 0\pmod{p}\right\}.
 $$  
This condition follows from the proof of Theorem \ref{thm:sphere-semi}.

The methods developed in this article allow us to address also  the following problem \cite[Open Problems 10.3.3 and 10.3.4]{BG}.

\begin{question}  
Which simply connected rational homology $5$-spheres admit negative Sasakian structures?
\end{question}

We construct new examples of such rational homology spheres. The definition of definite Sasakian structures will be given in Section \ref{sec:spheres}.

\begin{theorem}\label{thm:main2} 
Let $m_i\geq 2$ be pairwise coprime, and $g_i={1\over 2}(d_i-1)(d_i-2)$. 
Assume that $\gcd(m_i,d_i)=1$. Let $M$ be a Smale-Barden manifold with $H_2(M,\ZZ)
=\bigoplus\limits_{i=1}^r\ZZ_{m_i}^{2g_i}$ and spin, with the exceptions
$\ZZ_m^2, \ZZ_2^{2n}, \ZZ_3^6$. Then $M$ admits a negative Sasakian structure.
\end{theorem}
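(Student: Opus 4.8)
The plan is to realize $M$ as the total space of a semi-regular Seifert $S^1$-bundle over the surface orbifold $X = (\CP^2, \Delta)$, where $\Delta = \sum_{i=1}^r \bigl(1 - \tfrac{1}{m_i}\bigr) C_i$ and each $C_i \subset \CP^2$ is a smooth plane curve of degree $d_i$, the $C_i$ chosen in general position (pairwise transverse, no triple points). A smooth plane curve of degree $d_i$ has genus $g_i = \tfrac12(d_i-1)(d_i-2)$ and $H_1(C_i,\ZZ)=\ZZ^{2g_i}$, so the branch locus of multiplicity $m_i$ along $C_i$ contributes a summand $\ZZ_{m_i}^{2g_i}$ to the torsion of $H_2(M,\ZZ)$; the pairwise coprimality of the $m_i$ together with $\gcd(m_i,d_i)=1$ ensures that these summands do not interfere and that no spurious torsion or free part is created. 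This is exactly the topological input already assembled in the proof of Theorem \ref{thm:sphere-semi}, which I would invoke to produce a simply connected spin $M$ with $H_2(M,\ZZ) = \bigoplus_{i=1}^r \ZZ_{m_i}^{2g_i}$ carrying a semi-regular Sasakian structure; it then remains only to arrange that this structure be negative.

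For the orbifold Boothby--Wang construction the basic first Chern class satisfies $c_1^B = \pi^* c_1^{\orb}(X)$, so the resulting Sasakian structure is negative precisely when the orbifold canonical class is ample. Since
\[
 K^{\orb}_X = K_{\CP^2} + \Delta = \Bigl(-3 + \sum_{i=1}^r \bigl(1 - \tfrac{1}{m_i}\bigr) d_i\Bigr) H,
\]
negativity is equivalent to the numerical inequality $\sum_{i=1}^r \bigl(1 - \tfrac{1}{m_i}\bigr) d_i > 3$. Each admissible term satisfies $\bigl(1 - \tfrac{1}{m_i}\bigr) d_i \ge \tfrac32$, with equality only for $(m_i,d_i) = (2,3)$; since the $m_i$ are pairwise coprime at most one of them equals $2$, so for $r \ge 2$ the sum exceeds $3$ and the structure is negative. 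The inequality can therefore fail only for $r = 1$, and a short enumeration under the constraints $d \ge 3$ and $\gcd(m,d)=1$ shows that $K^{\orb}$ fails to be ample exactly for $(m,d)=(m,3)$ with $g=1$, for $(m,d)=(2,5)$ with $g=6$, and for $(m,d)=(3,4)$ with $g=3$; these give $\ZZ_m^2$, $\ZZ_2^{12}$ and $\ZZ_3^6$, on which $X$ is log-Fano and the construction produces only a positive structure.

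The remaining, and in my view hardest, point is the systematic exclusion of the entire family $\ZZ_2^{2n}$ rather than merely its single non-ample member $\ZZ_2^{12}$. Here the multiplicity $m_i=2$ is special: the $2$-torsion interacts with $w_2$, so the Barden invariant $i(M)$ and the spin condition must be tracked through the Gysin sequence of the Seifert bundle, where $w_2(M)=\pi^* w_2(X)$ is governed by the parity of the Euler number against $c_1^{\orb}(X)\bmod 2$. I expect that for a single branch curve of even multiplicity the Euler class forced by the requirements $b_2(M)=0$ and torsion exactly $\ZZ_2^{2n}$ has the wrong parity to make $M$ spin whenever $K^{\orb}$ is ample, so that the spin representative in this homology type is never produced by a negative structure; proving this parity obstruction rigorously, and checking that it does not arise for any odd $m_i$ nor for even $m_i$ once a second coprime curve is present, is the crux of the argument. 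For every remaining spin manifold the semi-regular structure of the first paragraph is already negative by the computation of the second, which completes the proof.
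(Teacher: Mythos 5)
Your first two paragraphs already constitute a complete proof of Theorem \ref{thm:main2}, and they do so by a genuinely different route than the paper. The paper's own proof is a pure exclusion argument: Theorem \ref{thm:sphere-semi} produces a semi-regular Sasakian structure on $M$; since $M$ is a rational homology sphere, that structure is definite by Theorem \ref{thm:pos-sphere} (\cite[Theorem 10.3.14]{BG}); and by Theorem \ref{thm:pos-sas} (\cite[Theorem 10.2.17]{BG}) a positive structure forces the torsion into a short list, whose only entries compatible with triangular $g_i$ and $\gcd(m_i,d_i)=1$ are $\ZZ_m^2$, $\ZZ_3^6$ and $\ZZ_2^{2n}$ --- so outside these the structure must be negative. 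You instead read off the sign directly from the orbifold canonical class $K^{\orb}_X=\bigl(-3+\sum_i(1-\tfrac1{m_i})d_i\bigr)H$, and your case analysis is correct: each term is at least $\tfrac32$ with equality only at $(m_i,d_i)=(2,3)$, at most one $m_i$ is even, so $r\geq 2$ always gives $K^{\orb}_X$ ample, and for $r=1$ ampleness fails exactly at $(m,3)$, $(3,4)$, $(2,5)$, i.e.\ at $\ZZ_m^2$, $\ZZ_3^6$, $\ZZ_2^{12}$. This buys something the paper's argument cannot: it shows the constructed structure is negative even in some of the excepted homology types, e.g.\ for a single curve with $m=2$ and odd degree $d\geq 7$, giving torsion $\ZZ_2^{(d-1)(d-2)}$, whereas the exclusion method is silent whenever the torsion appears on the Boyer--Galicki list.

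Your third paragraph, however, rests on a misreading of the statement, and the ``crux'' you describe there is neither needed nor true. The exceptions $\ZZ_m^2$, $\ZZ_2^{2n}$, $\ZZ_3^6$ are carve-outs from the hypotheses: the theorem makes no claim at all for those homology types, and in particular you are not required to show that negative structures fail to exist on spin manifolds with torsion $\ZZ_2^{2n}$. (The paper excepts the whole family because its exclusion argument cannot decide the sign for any torsion on the positive list, not because of a known obstruction.) Indeed, the parity obstruction you conjecture is false, and your own second paragraph refutes it: for $m=2$, $d\geq 7$ odd, the orbifold $(\CP^2,(1-\tfrac12)C)$ has $K^{\orb}$ ample, and the Seifert bundle of Theorem \ref{thm:sphere-semi} is a spin simply connected rational homology sphere with $H_2=\ZZ_2^{2n}$, $n=\tfrac12(d-1)(d-2)$, carrying a negative Sasakian structure. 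So you should simply delete the third paragraph and end the proof after the second; as written, you have declared a correct and complete argument incomplete on account of a claim that is both unnecessary and wrong.
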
  
Finally, let us mention the prerequisites used in this article. We use without further notice basic facts on topology of 4-manifolds and surfaces in them \cite{GS}. The basic facts about complex surfaces and curves on them, as well as the tools we use can be found in \cite{GH} and \cite{GS}.  

\subsection*{Acknowledgements} 
We thank Denis Auroux, Jos\'e Ignacio Cogolludo, Javier Fern\'andez de Bobadilla and Robert Gompf for useful comments and answering our questions.
The first author was partially supported by Project MINECO (Spain) PGC2018-095448-B-I00. 
The second author was supported by the National Science Center (Poland), grant NCN no. 2018/31/D/ST1/00053.
This work was partially done when the second author visited Institute des Hautes 
\'Etudes Scientifiques at Bur-sur-Yvette. His  thanks go to the Institute for the wonderful research atmosphere.

%%%%%%%%%%%%%%%%%%%%%%%%%%%%%%%%%%%%%%%%
\section{Quasi-regular and semi-regular Sasakian structures} \label{sec:seifert}
%%%%%%%%%%%%%%%%%%%%%%%%%%%%%%%%%%%%%%%%

A Sasakian structure on a compact manifold $M$ is called {\it quasi-regular} if there is a positive 
integer $\delta$ satisfying the condition that each point of $M$ has a neighbourhood $U$ such that each 
leaf for $\xi$ passes through $U$ at most $\delta$ times. 
If $\delta=1$, the structure is called regular. It is known \cite{R} that if a compact manifold admits a 
Sasakian structure, it also admits a quasi-regular one. Thus, when we are interested in existence 
questions, we may consider Sasakian structures which are quasi-regular. In the sequel we will 
assume that the notions of a Seifert bundle and a cyclic orbifold are known. One can consult 
\cite{BG} and \cite{MRT}. In particular we will need the following.

\begin{theorem}[{\cite[Theorems 7.5.1, 7.5.2]{BG}}]\label{thm:seifert} 
Let $(M,\eta,\xi,J,g)$ be a quasi-regular Sasakian manifold. Then the space of leaves $X$ of the foliation 
determined by the Reeb field $\xi$ has a natural structure of a Kähler orbifold. The projection $M\rightarrow X$ is a Seifert bundle.

Conversely, if $(X,\omega)$ is a Kähler orbifold and $M$ is the total space of the Seifert bundle determined by the class $[\omega]$, 
then $M$ admits a quasi-regular Sasakian structure.
\end{theorem}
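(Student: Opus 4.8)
The plan is to prove this as an orbifold version of the Boothby–Wang correspondence, treating the two directions separately and reducing everything to the transverse geometry of the Reeb foliation.

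For the forward direction, I would first extract from quasi-regularity the statement that the Reeb flow integrates to a locally free, effective $S^1$-action. Since $\xi$ is a nowhere-vanishing Killing field and each leaf passes through a small neighbourhood at most $\delta$ times, every orbit is a closed circle and its isotropy subgroup of $S^1$ is finite cyclic, of order bounded by $\delta$. Applying the slice theorem for locally free compact group actions, each orbit has an invariant tubular neighbourhood of the form $S^1 \times_\Gamma D$, where $\Gamma$ is the finite cyclic isotropy group acting linearly on a transverse disc $D \subset \ker\eta$. This produces cyclic orbifold uniformizing charts on the leaf space $X = M/S^1$ and simultaneously exhibits the projection $\pi\colon M \to X$ as a Seifert bundle.

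The second step is to descend the transverse geometry. The contact distribution $\ker\eta$ is the horizontal bundle of the foliation, and it carries the transverse metric $g|_{\ker\eta}$ and transverse almost complex structure $J|_{\ker\eta}$. Because $\xi$ is Killing and $\mathcal{L}_\xi\eta = 0$, one gets $\mathcal{L}_\xi(d\eta)=0$ and $\mathcal{L}_\xi J = 0$ on $\ker\eta$, so these tensors are \emph{basic} and push down to tensors $g_X$, $J_X$ and a $2$-form $\omega_X$ on $X$ with $\pi^*\omega_X = d\eta$. Here the precise content of the Sasakian (as opposed to merely K-contact) hypothesis enters: integrability of $I$ on the cone $C(M)$ is equivalent to integrability of the transverse holomorphic structure, so $J_X$ is a genuine complex structure, $\omega_X$ is its Kähler form, and $(X, J_X, \omega_X)$ is a Kähler orbifold whose Kähler class is the real orbifold first Chern class of the Seifert bundle.

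For the converse, given a Kähler orbifold $(X,\omega)$ with $[\omega]$ an integral orbifold class, I would build $\pi\colon M\to X$ as the Seifert $S^1$-orbibundle with orbifold Euler class $[\omega]$ and choose an orbifold connection $\eta$ with curvature $d\eta = \pi^*\omega$, normalized so that $\eta(\xi)=1$ on the infinitesimal generator $\xi$ of the $S^1$-action; quasi-regularity is then automatic from the finite isotropy. Then $\eta$ is a contact form with Reeb field $\xi$; pulling back $J$ to the horizontal distribution and setting $g = d\eta(J\cdot,\cdot) + \eta\otimes\eta$ gives a metric contact structure, and the fact that $\omega$ is Kähler downstairs forces the cone complex structure $I$ to be integrable, which is exactly the Sasakian condition. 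The main obstacle, in both directions, is the orbifold bookkeeping rather than the pointwise tensor algebra: one must verify in each uniformizing chart that the local $S^1\times_\Gamma D$ model is compatible with the basic tensors and the connection, that $M$ is a genuine smooth manifold, and that the integral orbifold class $[\omega]$ produces a well-defined Seifert bundle. This is where integrality of the Kähler class and the linear action of the cyclic isotropy groups must be reconciled carefully, and it is the step I expect to require the most work.
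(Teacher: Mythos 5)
This theorem is not proved in the paper itself; it is quoted from Boyer--Galicki \cite[Theorems 7.5.1, 7.5.2]{BG}, whose proof is exactly the orbifold Boothby--Wang correspondence you describe. Your outline---locally free $S^1$-action from quasi-regularity, slice theorem producing cyclic uniformizing charts and the Seifert bundle projection, descent of the basic transverse K\"ahler data with $\pi^*\omega_X=d\eta$, and the converse via a connection $1$-form with curvature $\pi^*\omega$ on the orbibundle with Euler class $[\omega]$---is essentially the same (standard) argument, and it is correct.
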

In the same way, one can characterize quasi-regular K-contact manifolds considering symplectic orbifolds 
instead of the K\"ahler ones (see \cite[Theorems 19 and 21]{MRT}).

Consider now a somewhat stronger condition than being a general Seifert fibration carrying a quasi-regular Sasakian structure. We begin by recalling the construction of smooth orbifolds \cite{MRT}. A smooth orbifold is an orbifold that is a topological manifold. 
From now on we restrict to $\dim M=5$ and $\dim X=4$.

\begin{proposition}[{\cite[Proposition 4]{MRT}}] 
\label{prop:1}
Let $X$ be a smooth oriented $4$-manifold with embedded surfaces $D_i$ intersecting transversally, 
and integers $m_i>1$ such that $\gcd(m_i,m_j)=1$ if $D_i$ and $D_j$ have a non-empty intersection. 
Then  $X$ admits a structure of a smooth orbifold with isotropy surfaces $D_i$ of multiplicities $m_i$.
\end{proposition}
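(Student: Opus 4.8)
The plan is to construct the orbifold charts directly and check that transition maps are compatible.

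First I would set up local models. Away from all the surfaces $D_i$, the orbifold is just the smooth manifold $X$ with trivial isotropy, so the charts there are the usual smooth charts. Near a point of a single surface $D_i$ (not lying on any other $D_j$), I would choose oriented coordinates $(z,w)$ on a disk bundle neighborhood so that $D_i$ is locally $\{w=0\}$, with $z$ a coordinate along $D_i$ and $w$ a normal coordinate. The orbifold chart is then modeled on $\CC^2$ with the action of $\ZZ_{m_i}$ acting as $(z,w)\mapsto (z,\zeta w)$ where $\zeta=e^{2\pi\imat/m_i}$; the quotient map $(z,w)\mapsto(z,w^{m_i})$ identifies the quotient with the coordinate neighborhood in $X$, and the isotropy along $D_i$ is exactly $\ZZ_{m_i}$. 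This is the standard cyclic quotient producing a transverse $\ZZ_{m_i}$ singularity along $D_i$ while keeping the underlying space smooth.

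Next I would handle the intersection points. At a point where $D_i$ meets $D_j$ transversally, transversality lets me pick coordinates $(z,w)$ with $D_i=\{z=0\}$ and $D_j=\{w=0\}$. Here I take the local uniformizing group to be $\ZZ_{m_i}\times\ZZ_{m_j}$ acting by $(z,w)\mapsto(\zeta_i z,\zeta_j w)$, with quotient map $(z,w)\mapsto(z^{m_i},w^{m_j})$. Because $D_i$ and $D_j$ intersect, the hypothesis $\gcd(m_i,m_j)=1$ is in force, which is what guarantees that this product group action is well-defined as a cyclic-type orbifold chart and, crucially, that the charts glue consistently with the single-surface charts along the punctured axes. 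The coprimality ensures that along the axis $\{w=0\}\setminus\{0\}$ the effective isotropy seen from the intersection chart matches the $\ZZ_{m_i}$ isotropy of the single-surface chart, and symmetrically for $D_j$.

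The main obstacle, and the real content, is verifying that these charts assemble into a genuine orbifold atlas: one must produce, on overlaps, equivariant transition diffeomorphisms covering the identity of $X$, and check that the isotropy groups are consistent where charts of different types meet. The key step is the compatibility along the punctured normal axes at intersection points, and this is precisely where coprimality is used: I would exhibit the explicit change of uniformizing coordinates between the $\ZZ_{m_i}\times\ZZ_{m_j}$ chart and each adjacent $\ZZ_{m_i}$ chart, showing the induced map is smooth and equivariant with respect to the relevant subgroup inclusion. Finally, since all the local models have underlying quotient spaces that are smooth (each $w\mapsto w^{m_i}$ gives back a smooth disk), the resulting orbifold is a smooth orbifold with the prescribed isotropy surfaces $D_i$ of multiplicities $m_i$, as claimed.
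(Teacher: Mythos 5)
The paper itself gives no proof of this proposition --- it is quoted from \cite[Proposition 4]{MRT} --- and the proof there is precisely the local-model construction you outline: trivial charts away from the surfaces; charts uniformized by $\CC^2$ with $\ZZ_{m_i}$ rotating the normal coordinate along a single $D_i$, with quotient map $(z,w)\mapsto (z,w^{m_i})$; and charts uniformized by $\CC^2$ with $\ZZ_{m_i}\times\ZZ_{m_j}$ acting coordinatewise at a transverse intersection point, with quotient map $(z,w)\mapsto(z^{m_i},w^{m_j})$, the compatibility of charts being checked exactly as you indicate. So your approach is correct and essentially identical to the cited proof.

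The one substantive criticism is that you attribute the coprimality hypothesis to the wrong step, and this is the single conceptual point of the statement. The gluing along the punctured axes does \emph{not} need $\gcd(m_i,m_j)=1$: the stabilizer of a point $(z,0)$ with $z\neq 0$ under the product action is $\{1\}\times\ZZ_{m_j}$ whatever the multiplicities are, and the inclusion of a restricted intersection chart into the adjacent single-surface chart for $D_j$, namely $(z,w)\mapsto(z^{m_i},w)$, is smooth and equivariant with respect to the identity of $\ZZ_{m_j}$ with no condition whatsoever on the $m$'s. What $\gcd(m_i,m_j)=1$ actually buys is that the uniformizing group at an intersection point, $\ZZ_{m_i}\times\ZZ_{m_j}\cong\ZZ_{m_im_j}$, is \emph{cyclic}. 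This is essential because the orbifolds of this paper and of \cite{MRT}, \cite{K} are cyclic orbifolds: they serve as bases of Seifert $S^1$-bundles (Proposition \ref{prop:orb-inv}, Theorem \ref{thm:Kollar}), so every isotropy group must embed into $S^1$. Without coprimality your charts still assemble into a perfectly good orbifold whose underlying space is a topological manifold, but its isotropy at the intersection points is non-cyclic, and it cannot carry the Seifert bundles needed for the Sasakian constructions that follow; your passing phrase that the product action might not be ``well-defined'' misstates this --- the action is fine, it is the cyclicity that fails. There is also a small labeling slip: with your conventions $D_j=\{w=0\}$, so the isotropy along that punctured axis is $\ZZ_{m_j}$, not $\ZZ_{m_i}$.
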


\begin{definition} 
If $M$ is a total space of the Seifert fibration whose base $X$ is a {\it smooth}  orbifold, 
we say that $M\rightarrow X$ is a {\it semi-regular} Seifert fibration.
\end{definition}    

A semi-regular Seifert fibration can be characterized more intrinsically. For each $p\in M$, the number $\delta(p)$ of times
that a nearby generic leaf cuts a (small) transversal $T(p)$ is finite. 
Then $M$ is semi-regular if $\delta(p)=\lcm(\delta(q) | q\in T(p), q\neq p)$, for all $p\in M$.

In this case, a more detailed description is available. 
Semi-regular Seifert fibrations are determined by orbit invariants and the first Chern 
class $c_1(M/X)\in H^2(X,\QQ)$ as follows. We say that an element $a$ in a free abelian group is 
{\it primitive} if it cannot be represented as $a=k\,b$ with a non-trivial $b\in A,k\in \NN$.

\begin{proposition}[{\cite[Proposition 14]{MRT}}]\label{prop:orb-inv} 
Let $X$ be an oriented $4$-manifold and $D_i\subset X$ oriented surfaces of $X$ which intersect transversally. 
Let $m_i>1$ such that $\gcd(m_i,m_j)=1$ if $D_i$ and $D_j$ intersect. Let $0<j_i<m_i$ with $\gcd(j_i,m_i)=1$ 
for every $i$. Let $0<b_i<m_i$ such that $j_ib_i\equiv 1 \pmod{m_i}$. Let $B$ be a complex line bundle over $X$. 
Then there exists a Seifert bundle $\pi: M\rightarrow X$ with orbit invariants $\{(D_i,m_i,j_i)\}$ and the first Chern class
 \begin{equation}\label{eqn:c1MX}
 c_1(M/X)=c_1(B)+\sum_i{b_i\over m_i}[D_i].
 \end{equation}
The set of all such Seifert bundles forms a principal homogeneous space under $H^2(X,\ZZ)$, where the action corresponds to changing of $B$.
\end{proposition}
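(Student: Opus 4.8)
The plan is to construct $M$ as the unit circle bundle of a suitable orbifold line bundle over $X$, to read off its orbit invariants and Chern class directly from that model, and finally to establish the torsor property by comparing two Seifert bundles with identical local data. First I would use Proposition \ref{prop:1} to equip $X$ with the smooth orbifold structure whose isotropy is $\ZZ_{m_i}$ along $D_i$; the hypothesis $\gcd(m_i,m_j)=1$ on intersecting surfaces is exactly what makes the two local cyclic groups combine into a single cyclic group $\ZZ_{m_im_j}$ at a transversal intersection, so that every uniformizing chart is a cyclic quotient. On this orbifold I would form the orbifold line bundle $\cL=B\ox\cO_X\!\big(\sum_i\tfrac{b_i}{m_i}D_i\big)$ attached to the $\QQ$-divisor $\sum_i\frac{b_i}{m_i}D_i$ twisted by $B$, and take $M\to X$ to be its unit circle bundle. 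By construction the orbifold first Chern class of $\cL$ is $c_1(B)+\sum_i\frac{b_i}{m_i}[D_i]$, which is the asserted formula \eqref{eqn:c1MX} for $c_1(M/X)$.

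The substantive point is to check that $M$ is a \emph{smooth} manifold carrying the prescribed orbit invariants. I would verify this in the local model near a generic point of $D_i$, where $X$ looks like $\CC\x D_i$ with $\ZZ_{m_i}$ acting on the normal coordinate $z$ by a primitive $m_i$-th root of unity, and $M$ is the quotient of $\CC\x S^1\x D_i$ by the diagonal action $\z\cdot(z,w,x)=(\z z,\z^{b_i}w,x)$. The condition $\gcd(b_i,m_i)=1$ (equivalently $\gcd(j_i,m_i)=1$) makes this action free, the binding constraint being freeness on the central circle $\{0\}\x S^1$; hence the quotient is a smooth $5$-manifold whose fiber over a point of $D_i$ is a single circle of multiplicity $m_i$. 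Tracing the local monodromy shows that the orbit invariant recorded is $b_i^{-1}\bmod m_i$, which the relation $j_ib_i\equiv1\pmod{m_i}$ identifies with $j_i$, matching $(D_i,m_i,j_i)$. At an intersection point of $D_i$ and $D_j$ the combined group $\ZZ_{m_i}\x\ZZ_{m_j}\iso\ZZ_{m_im_j}$ acts by $(z_1,z_2,w)\mapsto(\z_iz_1,\z_jz_2,\z_i^{b_i}\z_j^{b_j}w)$, and the coprimality $\gcd(m_i,m_j)=1$ together with $\gcd(b_i,m_i)=\gcd(b_j,m_j)=1$ keeps this action free, so the local models patch to a globally smooth $M$.

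Finally, for the principal homogeneous space statement, suppose $M$ and $M'$ are Seifert bundles over $X$ with the same orbit invariants $\{(D_i,m_i,j_i)\}$. Over the complement $X^\circ=X\setminus\bigcup_iD_i$ both are genuine principal $S^1$-bundles, and because their local models along every $D_i$ coincide, the fiberwise difference $M\ox(M')^{-1}$ extends across the $D_i$ to an honest circle bundle on all of $X$, hence is classified by a unique class in $H^2(X,\ZZ)$, the group of topological complex line bundles under $c_1$. This yields a free and transitive action: replacing $B$ by $B\ox B'$ changes $c_1(M/X)$ by $c_1(B')$ and so, when $c_1(B')\neq0$, produces a genuinely different Seifert bundle, while the comparison just made shows every bundle with the given orbit invariants arises this way. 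I expect the main obstacle to be the careful bookkeeping in the local model: pinning down the monodromy convention so that the Chern contribution $\frac{b_i}{m_i}$ is correctly paired with the orbit invariant $j_i$ through $j_ib_i\equiv1$, and checking smoothness uniformly across all strata, including the intersection points.
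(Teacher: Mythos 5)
This proposition is not proved in the paper at all: it is quoted from \cite[Proposition 14]{MRT} (which in turn rests on Koll\'ar's theory of Seifert bundles in \cite{K}), so there is no in-paper proof to compare against. Your construction --- putting on $X$ the smooth orbifold structure of Proposition \ref{prop:1}, forming the orbifold line bundle $B\otimes\cO_X\bigl(\sum_i\tfrac{b_i}{m_i}D_i\bigr)$, taking its unit circle bundle, checking freeness of the local $\ZZ_{m_i}$- and $\ZZ_{m_im_j}$-actions from the coprimality hypotheses $\gcd(b_i,m_i)=1$ and $\gcd(m_i,m_j)=1$, and deducing the torsor statement from the fact that the fiberwise difference of two Seifert bundles with identical orbit invariants extends to an honest circle bundle on $X$ --- is exactly the standard argument behind the cited result, and each step you give is correct, including the verification at the intersection points $D_i\cap D_j$. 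The one point that genuinely requires care is the one you flag yourself: the Seifert orbit invariant read from the local monodromy is the \emph{inverse} residue of the linearization weight on the fiber, which is precisely why the statement carries both $j_i$ and $b_i$ tied by $j_ib_i\equiv 1\pmod{m_i}$; your bookkeeping is consistent with that convention, so the proposal stands as a complete proof sketch.
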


We will also use the following fact.

\begin{proposition}[{\cite[Proposition 35]{K}}]\label{prop:i} 
If $\pi: M\rightarrow X$ is a Seifert fibration such that $X$ is smooth and the codimension $2$ isotropy divisors are orientable, then 
$i(M)=0,\infty$.
\end{proposition}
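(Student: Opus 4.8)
The plan is to prove the equivalent cohomological statement that the integral third Stiefel--Whitney class $W_3(M)=\beta(w_2(M))$ vanishes, where $\beta$ is the Bockstein. For a simply connected closed oriented $5$-manifold one has $H^2(M;\ZZ_2)=\Hom(H_2(M,\ZZ),\ZZ_2)$, and $W_3=0$ exactly when the homomorphism $w_2\colon H_2(M,\ZZ)\too\ZZ_2$ vanishes on the torsion subgroup $\mathrm{Tor}\,H_2(M,\ZZ)$ (since $\Hom(\mathrm{Tor},\ZZ)=0$, an integral lift of $w_2$ must kill torsion, and conversely). This is precisely the condition singling out the Barden values $i(M)=0$ (when $w_2\equiv 0$) and $i(M)=\infty$ (when $w_2$ is nonzero only on the free part), the finite positive values of $i(M)$ being detected by a nonzero $w_2$ on a $\ZZ_{2^j}$ torsion summand. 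So it suffices to evaluate $w_2(M)$ on the torsion classes.

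Next I would recall the structure of $\mathrm{Tor}\,H_2(M,\ZZ)$ for a Seifert bundle $\pi\colon M\to X$, following the standard homology computation of Seifert bundles: the torsion is generated by the classes $[T_\gamma]$, where $T_\gamma=\pi^{-1}(\gamma)$ for $\gamma$ a loop on one of the isotropy surfaces $D_i$, each $D_i$ of genus $g_i$ and multiplicity $m_i$ contributing a summand $H_1(D_i;\ZZ)\ox\ZZ_{m_i}\iso\ZZ_{m_i}^{2g_i}$ via $[\gamma]\mapsto[T_\gamma]$. Thus it is enough to show $\langle w_2(M),[T_\gamma]\rangle=0$ for every such $\gamma$. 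Since $M$ is oriented, $w_1(TM)=0$; and since $D_i$ is orientable the circle bundle $\pi^{-1}(D_i)\to D_i$ is orientable, so $T_\gamma$ is a $2$-torus with trivial tangent bundle. Hence $w_2(TM)|_{T_\gamma}=w_2(\nu)$, where $\nu$ is the rank-$3$ normal bundle of $T_\gamma$ in $M$.

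The heart of the argument is the identification of $\nu$. Working in the local Seifert model over a chart $U\subset D_i$, where $M$ is $(S^1\x\CC_z\x U)/\ZZ_{m_i}$ with $\ZZ_{m_i}$ acting by $(\theta,z)\mapsto(e^{2\pi\imat/m_i}\theta,\,e^{2\pi\imat j_i/m_i}z)$, the torus $T_\gamma$ is swept out by the core circles $\{z=0\}$ over $\gamma$, and its normal bundle splits into the direction normal to $\gamma$ inside $D_i$ and the two $z$-directions normal to $D_i$ inside $X$. Each is the $\pi$-pullback of the corresponding piece of $\nu(\gamma\subset X)=\nu(\gamma\subset D_i)\oplus(\nu(D_i\subset X)|_\gamma)$; the coupling of $\theta$ and $z$ in the $\ZZ_{m_i}$-action only rescales the fibre coordinate and does not change the underlying real bundle. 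Because $D_i$ is orientable, traversing $\gamma$ produces no monodromy: the line bundle $\nu(\gamma\subset D_i)$ is trivial (every loop in an orientable surface is two-sided), and the oriented rank-$2$ bundle $\nu(D_i\subset X)|_\gamma$ is trivial over $\gamma\iso S^1$. Hence $\nu$ is pulled back from $\gamma$, so $w_2(\nu)=0$ and $\langle w_2(M),[T_\gamma]\rangle=0$.

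Putting these together, $w_2(M)$ vanishes on $\mathrm{Tor}\,H_2(M,\ZZ)$, so $W_3(M)=0$ and therefore $i(M)\in\{0,\infty\}$. The delicate point, and the step I expect to require the most care, is exactly the normal-bundle computation in the presence of the multiplicity $m_i$: one must verify that the $\ZZ_{m_i}$-twist coupling the Seifert fibre to the normal $z$-plane does not secretly introduce a nontrivial $w_2$. This is also where orientability is essential, since for a non-orientable $D_i$ a one-sided loop $\gamma$ yields a Klein-bottle $T_\gamma$ together with a nonzero $w_2$, producing a finite positive $i(M)$, in agreement with K\'ollar's examples.
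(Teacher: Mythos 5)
The paper itself offers no proof to compare against: Proposition~\ref{prop:i} is quoted verbatim from Koll\'ar \cite{K}, so your argument must stand on its own. Your reductions are correct: for a Smale--Barden manifold, $i(M)\in\{0,\infty\}$ is indeed equivalent to $w_2$ killing the torsion of $H_2(M,\ZZ)$ (equivalently $W_3(M)=0$); the torsion of a semi-regular Seifert bundle is indeed generated by the vertical tori $T_\gamma=\pi^{-1}(\gamma)$, $\gamma$ a loop in $D_i$ avoiding the intersections $D_i\cap D_j$ (this is part of Koll\'ar's homology computation, finer than Theorem~\ref{thm:Kollar} as quoted here, but logically independent of the present proposition, so no circularity); and $\langle w_2(M),[T_\gamma]\rangle=\langle w_2(\nu),[T_\gamma]\rangle$ because $T_\gamma$ is a torus. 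Note also that, granting formula (\ref{eqn:w2M}), the proposition is immediate: $w_2(M)$ is then the pullback of the mod $2$ reduction of an integral class, hence has an integral lift, hence $W_3(M)=0$; that is essentially the route in the literature, whereas yours evaluates $w_2$ on torsion generators directly.

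The genuine gap is exactly at the step you flag as delicate, and it is not closed. You assert that the rank-$2$ summand of $\nu$, namely $L|_{T_\gamma}$ where $L=\nu(E_i\subset M)$ and $E_i=\pi^{-1}(D_i)$, ``is the $\pi$-pullback of $\nu(D_i\subset X)|_\gamma$''. There is no natural map realizing this: in your own local model the projection to $X$ reads $[(\theta,z,u)]\mapsto(z^{m_i},u)$, whose derivative in the $z$-directions vanishes identically along $E_i$, so $d\pi$ induces the \emph{zero} map on normal bundles, not an isomorphism. The phrase ``the $\ZZ_{m_i}$-twist only rescales the fibre coordinate'' is precisely the assertion to be proved, not a proof of it. The claim is nevertheless true, and here is one way to establish it. Take an $S^1$-invariant tubular neighbourhood of $E_i$, so that $L$ becomes an $S^1$-equivariant complex line bundle over $E_i$ (it is orientable because $E_i$ and $M$ are, using orientability of $D_i$). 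The subgroup $\ZZ_{m_i}\subset S^1$ fixes $E_i$ pointwise and acts on each fibre of $L$ through a character, which is locally constant, hence constant, and faithful by the local model. Therefore $\ZZ_{m_i}$ acts trivially on $L^{\otimes m_i}$, so the $S^1$-action on $L^{\otimes m_i}$ factors through the free action of $S^1/\ZZ_{m_i}$ on $E_i$, and $L^{\otimes m_i}\cong q^*L_0$ for some line bundle $L_0$ on $D_i$, where $q:E_i\to D_i$. Restricting to $T_\gamma$ gives $m_i\deg\bigl(L|_{T_\gamma}\bigr)=\langle c_1(L_0),q_*[T_\gamma]\rangle=0$, since $q_*[T_\gamma]=0$ in $H_2(D_i,\ZZ)$. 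So $L|_{T_\gamma}$ has degree zero, hence is trivial, and together with the (genuine) pullback $\pi^*\nu(\gamma\subset D_i)$, which is trivial since loops in an orientable surface are two-sided, this yields $w_2(\nu)=0$.

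One last correction: your closing remark about the non-orientable case is wrong. For a one-sided loop $\gamma$ in a non-orientable $D_i$, the preimage $\pi^{-1}(\gamma)$ is still a torus, never a Klein bottle, because the fibres are coherently oriented by the circle action, so the monodromy cannot reverse their orientation. In that case the failure is located in the normal bundle ($\nu(\gamma\subset D_i)$ becomes a M\"obius band, $L$ need not be orientable, so the cross term $w_1\cdot w_1$ can survive) and in the torsion computation, not in the topology of $T_\gamma$.
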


The homology of the total  space of a semi-regular Seifert fibration  $M\rightarrow X$ is given by the following result.

\begin{theorem}[{\cite[Theorem 16]{MRT}}] \label{thm:Kollar}
Suppose that $\pi:M\to X$ is a semi-regular Seifert bundle with isotropy surfaces $D_i$ with multiplicities $m_i$. 
Then $H_1(M,\ZZ)=0$ if and only if
 \begin{enumerate}
 \item $H_1(X,\ZZ)=0$,
 \item $H^2(X,\ZZ)\to \bigoplus H^2(D_i,\ZZ_{m_i})$ is surjective,
 \item $c_1(M/m) =m\, c_1(M)\in H^2(X,\ZZ)$ is primitive, where $m=\lcm(m_i)$.
 \end{enumerate}
 Moreover $H_2(M,\ZZ)=\ZZ^k\oplus \bigoplus \ZZ_{m_i}^{2g_i}$, $g_i$ is the genus of $D_i$, $k+1=b_2(X)$.
\end{theorem}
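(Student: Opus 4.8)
The plan is to compute the integral cohomology of $M$ from the Leray spectral sequence of the Seifert fibration $\pi\colon M\to X$. Since every fibre (regular or short) is a circle, we have $R^0\pi_*\ZZ=\ZZ_X$, $R^q\pi_*\ZZ=0$ for $q\geq 2$, and the only interesting term is the sheaf $\cF:=R^1\pi_*\ZZ$. The geometric heart of the argument is the behaviour of $\cF$ along the isotropy surfaces. Working in the standard local Seifert model over a transverse disc, the regular fibre is homologous to $m_i$ times the short fibre lying over $D_i$; dually, the restriction of the fibre class transversally to $D_i$ is multiplication by $m_i$. I would record this as a short exact sequence of sheaves on $X$,
\begin{equation*}
0\to R^1\pi_*\ZZ\to \ZZ_X\to \bigoplus_i (\ZZ_{m_i})_{D_i}\to 0,
\end{equation*}
where $(\ZZ_{m_i})_{D_i}$ is the constant sheaf $\ZZ_{m_i}$ supported on $D_i$ and the middle map is an isomorphism off $\bigcup_i D_i$ and is multiplication by $m_i$ transversally to $D_i$. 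At the points $D_i\cap D_j$ the local model is a product of two such models, and here the hypothesis $\gcd(m_i,m_j)=1$ both guarantees that $M$ is a manifold and that the quotient sheaf is $\ZZ_{m_i}\oplus\ZZ_{m_j}\cong\ZZ_{m_im_j}$.

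Passing to cohomology, the two--row spectral sequence collapses into a Gysin--type long exact sequence
\begin{equation*}
\cdots\to H^{q-2}(X,\cF)\xrightarrow{d_2} H^q(X,\ZZ)\xrightarrow{\pi^*} H^q(M)\to H^{q-1}(X,\cF)\xrightarrow{d_2} H^{q+1}(X,\ZZ)\to\cdots,
\end{equation*}
and the long exact sequence attached to the sheaf sequence above computes each $H^p(X,\cF)$ from $H^p(X,\ZZ)$ and the restriction maps $\rho_p\colon H^p(X,\ZZ)\to\bigoplus_i H^p(D_i,\ZZ_{m_i})$. First I would dispose of the free part rationally: a smooth orbifold is a rational homology manifold, so $\cF\otimes\QQ=\ZZ_X\otimes\QQ$ and the sequence is the ordinary Gysin sequence with Euler class $c_1(M/X)\in H^2(X,\QQ)$. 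As $c_1(M/X)\neq 0$ under condition (3), cupping with it is injective on $H^0$, giving $b_1(M)=b_1(X)$ and $b_2(M)=b_2(X)-1$, i.e.\ $k+1=b_2(X)$.

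The integral bookkeeping then yields the equivalence and the torsion formula, with each condition killing one obstruction. The transgression $d_2$ carries the generator of $H^0(X,\cF)=\ker\rho_0\cong\ZZ$ (the class represented by $\lcm(m_i)$ times the regular fibre) to $m\,c_1(M)=c_1(M/m)\in H^2(X,\ZZ)$; hence $H^1(M)=0$ forces $H^1(X,\ZZ)=0$ and forces this class to have infinite order, while the torsion of $H^2(X,\ZZ)/\langle m\,c_1(M)\rangle$ vanishes precisely when $m\,c_1(M)$ is primitive. This reads off conditions (1) and (3) from the low--degree part (coprimality ensures no extra cyclic torsion from $H^0(X,\cF)$ survives beyond this transgression). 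Condition (2) enters through Poincar\'e duality: computing $H^4(M)\cong H_1(M)$, the term $H^3(X,\cF)$ contains $\coker\big(H^2(X,\ZZ)\to\bigoplus_i H^2(D_i,\ZZ_{m_i})\big)$, which survives to $H_1(M)$, so its vanishing is exactly condition (2). Conversely, when (1)--(3) hold all of these obstructions disappear and $H_1(M)=0$. Finally, using $\mathrm{Tors}\,H_2(M)\cong\mathrm{Tors}\,H^3(M)$, the torsion of $H_2(M)$ is read from $H^2(X,\cF)$, whose torsion is the image of the connecting map $\bigoplus_i H^1(D_i,\ZZ_{m_i})\to H^2(X,\cF)$; since $H^1(X,\ZZ)=0$ this map is injective, so the torsion is all of $\bigoplus_i H^1(D_i,\ZZ_{m_i})=\bigoplus_i\ZZ_{m_i}^{2g_i}$ because $D_i$ is a closed oriented surface of genus $g_i$.

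The main obstacle is precisely this integral bookkeeping near the isotropy locus: establishing the sheaf sequence with the correct multiplication--by--$m_i$ maps, controlling the intersection points $D_i\cap D_j$ through the coprimality hypothesis, and --- most delicately --- tracking the spectral sequence differentials and the extension problems carefully enough to see that the three conditions annihilate exactly the torsion and rank defects in $H_1(M)$ with nothing left over, while verifying that no spurious torsion contaminates the formula for $H_2(M)$. The rational computation and the fibre--homology relation $[\text{regular fibre}]=m_i\,[\text{short fibre}]$ are routine; the delicate point is the exact matching of conditions (2) and (3) to the surviving torsion, the former appearing only after invoking Poincar\'e duality on $M$.
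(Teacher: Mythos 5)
The paper itself gives no proof of this statement---it is quoted from \cite[Theorem 16]{MRT}, which in turn adapts Koll\'ar's argument in \cite{K}---and your proposal follows essentially that same route: the Leray spectral sequence of $\pi$, the sheaf sequence $0\to R^1\pi_*\ZZ\to \ZZ_X\to \bigoplus_i(\ZZ_{m_i})_{D_i}\to 0$, the transgression of $H^0(X,R^1\pi_*\ZZ)\cong\ZZ$ hitting $c_1(M/m)=m\,c_1(M)$, Poincar\'e duality $H^4(M)\cong H_1(M)$ to bring in condition (2), and the torsion $\bigoplus_i\ZZ_{m_i}^{2g_i}$ coming from $\bigoplus_i H^1(D_i,\ZZ_{m_i})$. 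The bookkeeping you defer (e.g.\ ruling out torsion in $H_1(X)$ in the necessity direction, which follows since $H_1(M)=0$ makes $H^2(M,\ZZ)$ torsion-free by universal coefficients) is exactly what is carried out in that reference, so your plan is correct and coincides with the known proof.
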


We will also need to calculate the fundamental group of $M$.  By \cite[Theorem 4.3.18]{BG}, we have an exact sequence 
 $$
 \pi_1(S^1)=\ZZ \to \pi_1(M) \to  \pi^{\orb}_1(X).
 $$
If $\pi_1^{\orb}(X)=1$, then $\pi_1(M)$ is abelian,
hence $\pi_1(M)=H_1(M,\ZZ)$. The first homology group can be computed via Theorem \ref{thm:Kollar}.

\medskip

In this work we construct various Seifert bundles using some configurations of smooth curves in complex algebraic surfaces. 
The calculations of the fundamental groups use the following result.
\begin{proposition} \label{prop:abelian}
If $X$ is smooth simply-connected projective surface and $D_i$ are smooth complex curves with $D_i^2>0$ and 
they intersect transversally, then 
$$\pi_1(X-(D_1\cup \ldots \cup D_r))$$
 is abelian.
\end{proposition}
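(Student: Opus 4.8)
The plan is to use the standard result that the fundamental group of the complement of a complex curve (or curve arrangement) in a simply-connected projective surface is governed by van Kampen-type presentations and vanishing cycles. First I would reduce to understanding each $D_i$ separately and then combine. Since $X$ is simply connected, the key input is that the inclusion-induced map $\pi_1(X-D) \to \pi_1(X)=1$ together with a Lefschetz-type argument controls the meridians. Concretely, for a single smooth curve $D\subset X$, the complement $X-D$ has $\pi_1$ normally generated by the class of a small meridian loop $\mu$ encircling $D$, because $X$ is simply connected: any loop in $X-D$ bounds a disk in $X$, and by transversality that disk meets $D$ in finitely many points, expressing the loop as a product of conjugates of $\mu$. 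Hence $\pi_1(X-D)$ is cyclic, generated by $\mu$, and in particular abelian.

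The next step handles the full arrangement $D_1\cup\cdots\cup D_r$. Here I would argue that $\pi_1(X-(D_1\cup\cdots\cup D_r))$ is generated by the meridians $\mu_1,\ldots,\mu_r$, one for each component, again because $X$ is simply connected so every loop bounds a disk meeting the $D_i$ transversally in finitely many points. The content of the claim is then that these generators \emph{commute}. This is where the hypotheses $D_i^2>0$ and transversal intersection enter. The commutation relations come from the local picture at each intersection point $D_i\cap D_j$: a transversal normal-crossing intersection contributes the relation $[\mu_i,\mu_j]=1$ via the standard nodal local model (two meridians of a normal crossing commute). The positivity condition $D_i^2>0$ is what I expect to be the crux: it guarantees, via the Lefschetz hyperplane theorem applied to a pencil (or to a sufficiently ample situation), that the meridian $\mu_i$ of a single curve $D_i$ is itself central, or that no further nontrivial relations arise that would prevent abelianization.

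Concretely, the mechanism I would invoke is the following. Because $D_i^2>0$, the curve $D_i$ is connected and moves in a positive-dimensional, indeed big, linear system; one can fiber a neighborhood or use a Lefschetz pencil containing $D_i$ so that a generic pencil member meets $D_i$, and the monodromy of the pencil shows that the single meridian $\mu_i$ generates a central (hence abelian) subgroup of the complement of that curve alone. Combining this centrality for each $i$ with the normal-crossing commutation relations $[\mu_i,\mu_j]=1$ coming from the transversal intersection points, I conclude that all the generators $\mu_1,\ldots,\mu_r$ pairwise commute, so the whole group is abelian.

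The main obstacle will be making the positivity hypothesis $D_i^2>0$ do its job rigorously, since abelianity genuinely can fail for curves of non-positive self-intersection (e.g. fibers of fibrations, where the complement can have free or more complicated fundamental group). The cleanest route, which I would pursue, is to cite or reprove a Nori-type or Zariski-type theorem: Nori's lemma states that if $D=\sum D_i$ is a curve with smooth components meeting transversally in a smooth projective surface and $D_i^2 > \sum_{j\neq i} (D_i\cdot D_j)$, or more simply when each $D_i$ is sufficiently positive, then $\pi_1(X-D)$ is abelian. I would verify that the hypotheses $D_i^2>0$ together with $X$ simply connected and transversality place us in the range where Nori's weak Lefschetz theorem for open varieties applies, and then the abelianity is immediate. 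The delicate point to check is precisely the numerical inequality needed to trigger Nori's theorem, and whether $D_i^2>0$ alone (rather than a stronger ampleness bound) suffices in the configurations actually used in the paper's Seifert bundle constructions.
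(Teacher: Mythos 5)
Your closing paragraph lands exactly where the paper does: the paper's entire proof is the one-line citation of Nori's weak Lefschetz theorem (\cite{Nori}, page 306, item II) with $E=\emptyset$, which for simply connected $X$ says precisely that $\ker\bigl(\pi_1(X-(D_1\cup\ldots\cup D_r))\to\pi_1(X)\bigr)=\pi_1(X-(D_1\cup\ldots\cup D_r))$ is abelian when the components $D_i$ are smooth, cross transversally at double points, and $D_i^2>0$. The ``delicate point'' you leave unresolved does check out: the inequality of the form $D_i^2>2r(D_i)$ in Nori's theorem refers to the nodes of the individual component $D_i$, which is the relevant refinement when the components are themselves allowed to be nodal curves; for \emph{smooth} components it reduces to $D_i^2>0$, and no bound involving the mutual intersection numbers $D_i\cdot D_j$ is required. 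This matters beyond this proposition: in the paper's applications (e.g.\ Theorems \ref{thm:sums1} and \ref{thm:sums2}) the curves have many mutual intersections, so a hypothesis like $D_i^2>\sum_{j\neq i}D_i\cdot D_j$ would be violated there. One caveat you implicitly assume and should state: ``intersect transversally'' must be read as normal crossings (no triple points); three concurrent lines in $\CP^2$ meet pairwise transversally and have self-intersection $1$, yet the complement has fundamental group free of rank $2$.

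As a self-contained argument, however, your first three paragraphs have genuine gaps, and they are not fillable by the sketched pencil/monodromy argument because they \emph{are} the content of Nori's theorem. First, simple connectivity of $X$ only gives that $\pi_1(X-D)$ is \emph{normally} generated by the meridians; your deduction ``hence $\pi_1(X-D)$ is cyclic, generated by $\mu$'' for a single smooth curve is invalid, since the normal closure of $\mu$ equals the subgroup $\langle\mu\rangle$ only once $\mu$ is known to be central (equivalently, once the group is known to be abelian) --- which is the statement to be proved. Second, the commutation relation at a node holds between specific \emph{local} representatives of the two meridians; global meridians are well defined only up to conjugacy, so what you can import is $[\mu_i^{g},\mu_j^{h}]=1$ for certain conjugating elements $g,h$ depending on connecting paths, and such relations together with normal generation do not force the group to be abelian (this is exactly why abelianity fails when some $D_i^2\le 0$, e.g.\ for fibers of fibrations, even though the local nodal relations are unchanged). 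So the honest options are either to reproduce Nori's covering-space argument in full, or to do what the paper does and cite \cite{Nori} directly; your proposal is correct only in the second reading.
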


\begin{proof}
 This follows from \cite{Nori}, page 306, item II, taking $E$ empty.
\end{proof}

\medskip

We finally study the second Stiefel-Whitney class of a semi-regular Seifert bundle $\pi:M\to X$. 
The formula for Seifert fibrations proved in \cite[Lemma 36]{K} says that a 
Seifert fibration with orbit invariants as in Proposition \ref{prop:orb-inv} has the total space $M$ with the 
second Stiefel-Whitney class given by the formula
 \begin{equation}\label{eqn:w2}
  w_2(M)=\pi^*w_2(X)+\sum_i(m_i-1)[E_i]
  \end{equation}
where $E_i=\pi^{-1}(D_i)$. Note also that $\pi^*[D_i]=m_i [E_i]$. 
The result in \cite[Corollary 37]{K} gives an alternative formula in terms of the orbit invariants $b_i$,
 \begin{equation}\label{eqn:w2M}
 w_2(M)=\pi^*(w_2(X)+\sum_i b_i[D_i] + c_1(B)). 
 \end{equation}
For using it, we need to compute the kernel of $\pi^*$. This is given by the following.

\begin{proposition}\label{prop:w2}
 Let $\pi:M\to X$ be a semi-regular Seifert fibration with isotropy locus and orbit invariants
 $\{(D_i,m_i,b_i)\}$, and $H_1(M,\ZZ)=0$. The mod $2$ cohomology of $X$ is $H^2(X,\ZZ_2)=\ZZ_2^{k+1}$ and 
 the mod $2$ cohomology of $M$ is 
  $$
   H^2(M,\ZZ_2)=\ZZ_2^k \oplus (\bigoplus_{m_i \text{ even}} (\ZZ_2)^{2g_i})
  $$
The map $\pi^*:H^2(X,\ZZ_2)\to H^2(M,\ZZ_2)$ has image onto the first summand $\ZZ_2^k \subset H^2(M,\ZZ_2)$.
Its kernel is:
 \begin{itemize}
 \item If all $m_i$ are odd, then $\ker\pi^*$ is one-dimensional spanned by $c_1(B)+\sum b_i [D_i]$.
 \item If $c=\#\{i\, | \, m_i$ even$\}>0$, then $\ker\pi^*$ is $c$-dimensional, and $\ker\pi^*$ is spanned by those $[D_i]$ with $m_i$ even.
 \end{itemize}
\end{proposition}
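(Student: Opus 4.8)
The plan is to deduce all the mod $2$ statements from the integral map $\pi^\ast\colon H^2(X;\ZZ)\to H^2(M;\ZZ)$, whose torsion behaviour is governed by the single relation $\pi^\ast[D_i]=m_i[E_i]$ together with the primitivity of the Chern class in Theorem \ref{thm:Kollar}(3). First I would record the cohomology groups. Since $H_1(M;\ZZ)=0$, the universal coefficient theorem gives $H^2(M;\ZZ)=\Hom(H_2(M;\ZZ),\ZZ)=\ZZ^k$, which is torsion-free, and $H^2(M;\ZZ_2)=\bigl(H^2(M;\ZZ)\otimes\ZZ_2\bigr)\oplus\mathrm{Tor}(H^3(M;\ZZ),\ZZ_2)$. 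By Poincaré duality $H^3(M;\ZZ)\cong \ZZ^k\oplus\bigoplus_i\ZZ_{m_i}^{2g_i}$, whose $2$-torsion is $\bigoplus_{m_i\text{ even}}\ZZ_2^{2g_i}$; this yields $H^2(M;\ZZ_2)=\ZZ_2^k\oplus\bigl(\bigoplus_{m_i\text{ even}}\ZZ_2^{2g_i}\bigr)$, in which the first summand is exactly the mod $2$ reduction of $H^2(M;\ZZ)$. For the simply connected base $X$ one has $H^3(X;\ZZ)=0$, so $H^2(X;\ZZ_2)=\ZZ_2^{k+1}$ and the reduction $H^2(X;\ZZ)\to H^2(X;\ZZ_2)$ is onto. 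Comparing $\pi^\ast$ over $\ZZ$ and over $\ZZ_2$ in the evident commutative square, and using $\mathrm{im}(\pi^\ast_\ZZ)\subseteq H^2(M;\ZZ)=\ZZ^k$, shows that $\mathrm{im}(\pi^\ast_2)$ is the reduction of $\mathrm{im}(\pi^\ast_\ZZ)$ and therefore lands in the first summand $\ZZ_2^k$; it fills that summand exactly when $\coker(\pi^\ast_\ZZ)$ has no $2$-torsion.

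Next I would locate the kernel. Over $\QQ$ the Gysin sequence of the Seifert bundle gives $\ker\pi^\ast_\QQ=\QQ\langle c_1(M/X)\rangle$, so the integral kernel is cyclic, generated by $v:=m\,c_1(M/X)=m\,c_1(B)+\sum_i\tfrac{m}{m_i}b_i[D_i]$ with $m=\lcm(m_i)$, which is primitive by Theorem \ref{thm:Kollar}(3); hence its reduction $\bar v\neq 0$ lies in $\ker\pi^\ast_2$. If every $m_i$ is odd then $m$ and each $m/m_i$ are odd, so $\bar v=c_1(B)+\sum_i b_i[D_i]$, the claimed generator. If $c=\#\{i:m_i\text{ even}\}>0$ then $m\,c_1(B)$ and the odd-$m_i$ terms vanish mod $2$, so $\bar v$ is a sum of those $[D_i]$ with $m_i$ even (of maximal $2$-adic valuation), hence already in $\langle [D_i]:m_i\text{ even}\rangle$. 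Each such $[D_i]$ is itself in $\ker\pi^\ast_2$ since $\pi^\ast[D_i]=m_i[E_i]\equiv 0$, so $\langle[D_i]:m_i\text{ even}\rangle\subseteq\ker\pi^\ast_2$.

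The main work, and the main obstacle, is the exact size of the kernel, which I would obtain from $\coker(\pi^\ast_\ZZ)$. Evaluating $\pi^\ast v=0$ in the torsion-free group $H^2(M;\ZZ)$ and cancelling $m$ gives the relation $\sum_i b_i[E_i]=-\pi^\ast c_1(B)$; combined with the fact (standard for semi-regular Seifert bundles) that $H^2(M;\ZZ)$ is generated by $\pi^\ast H^2(X;\ZZ)$ together with the classes $[E_i]$, this identifies $\coker(\pi^\ast_\ZZ)\cong\bigl(\bigoplus_i\ZZ_{m_i}\bigr)/\langle(b_i)_i\rangle$, where primitivity guarantees there is exactly one relation. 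Since each $b_i$ is a unit modulo $m_i$, the $2$-primary part of this cokernel is $\ZZ_2^{\,c-1}$ when $c>0$ and trivial when $c=0$. A short $\mathrm{Tor}$ bookkeeping for the four-term sequence $0\to\ker\pi^\ast_\ZZ\to H^2(X;\ZZ)\to H^2(M;\ZZ)\to\coker(\pi^\ast_\ZZ)\to 0$ then yields $\dim_{\ZZ_2}\ker\pi^\ast_2=1+\dim_{\ZZ_2}\bigl(\coker(\pi^\ast_\ZZ)\otimes\ZZ_2\bigr)$, namely $1$ when all $m_i$ are odd and $c$ when $c>0$. Finally, linear independence modulo $2$ of the even $[D_i]$, needed to conclude that they span the $c$-dimensional kernel, follows from the surjectivity in Theorem \ref{thm:Kollar}(2) via non-degeneracy of the mod $2$ intersection form on $H^2(X;\ZZ_2)$. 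I expect the two delicate points to be establishing that the cokernel carries exactly one relation (so its $2$-rank is $c-1$ rather than larger) and the $\mathrm{Tor}$ computation converting that $2$-rank into $\dim_{\ZZ_2}\ker\pi^\ast_2$.
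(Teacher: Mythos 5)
Your strategy is genuinely different from the paper's: the paper runs the Leray spectral sequence of $\pi$ with $\ZZ_2$ coefficients after decomposing $R^1\pi_*\underline{\ZZ}{}_2=j_!\underline{\ZZ}{}_2\oplus i_*\underline{\ZZ}{}_2$ along the even-multiplicity locus, whereas you work integrally and try to control $\pi^*_\ZZ$ and its cokernel. Several of your reductions are correct and clean: the computation of $H^2(M,\ZZ_2)$ and $H^2(X,\ZZ_2)$ via universal coefficients and Poincar\'e duality; the identification of $\ker\pi^*_\ZZ$ as the cyclic group generated by the primitive class $v=m\,c_1(M/X)$ (rational Gysin sequence plus torsion-freeness of $H^2(M,\ZZ)$); the containment $\langle [D_i] : m_i \text{ even}\rangle\subseteq\ker\pi^*_2$ together with linear independence of those classes via Theorem \ref{thm:Kollar}(2); and the bookkeeping $\dim_{\ZZ_2}\ker\pi^*_2=1+\dim_{\ZZ_2}\mathrm{Tor}(\coker\pi^*_\ZZ,\ZZ_2)$, which is valid because $H^3(X,\ZZ)=0$ and $H^2(M,\ZZ)$ is free.

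However, there is a genuine gap, exactly at the point you flag, and it is not a technicality. Everything you actually prove yields only the lower bounds $\dim\ker\pi^*_2\geq 1$ (all $m_i$ odd), resp.\ $\geq c$ (some $m_i$ even); the matching upper bound is the whole content of the proposition, and in your scheme it is carried entirely by the assertion $\coker(\pi^*_\ZZ)\cong\bigl(\bigoplus_i\ZZ_{m_i}\bigr)/\langle (b_i)_i\rangle$. That assertion rests on two claims you never establish: (a) $H^2(M,\ZZ)$ is generated by $\pi^*H^2(X,\ZZ)$ together with the classes $[E_i]$, and (b) the relation lattice $\{(n_i)\,:\,\sum_i n_i[E_i]\in\pi^*H^2(X,\ZZ)\}$ is generated by the obvious relations $m_ie_i$ and $(b_1,\ldots,b_r)$. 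Claim (a) is not ``standard'': it appears neither in this paper nor, in this form, in \cite{K} or \cite{MRT}, and it is essentially the structural output of the very spectral-sequence analysis the paper performs; any proof I can see requires a comparable Gysin/Leray argument (comparing $M$, $M\setminus\bigcup E_i$, and the honest circle bundle over $X\setminus\bigcup D_i$ --- note that $H^1(X\setminus\bigcup D_i,\ZZ)$ need not vanish, so classes on $M\setminus\bigcup E_i$ need not be pullbacks, which blocks the naive argument). Claim (b) is likewise unsupported: ``primitivity guarantees there is exactly one relation'' is not an argument --- primitivity of $c_1(M/m)$ is the hypothesis equivalent to $H_1(M,\ZZ)=0$, already assumed, and it gives no a priori bound on the relation lattice; if that lattice were strictly larger than the one you name, the cokernel would be smaller and your kernel count would fail. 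So, as written, the proposal reduces the proposition to unproven statements of essentially the same depth as the proposition itself.

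A small point in your favour: your criterion for when the image of $\pi^*_2$ fills the summand $\ZZ_2^k$ (namely $\coker\pi^*_\ZZ\otimes\ZZ_2=0$) is sharper than the proposition's wording; indeed, when $c\geq 2$ the paper's own proof shows the image is $\ZZ_2^{k+1-c}$, a proper subspace of the first summand, consistent with your framework.
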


\begin{proof}
We consider the Leray spectral sequence of the Seifert fibration $\pi:M\to X$ with coefficients in $\ZZ_2$.
This says that $H^i(X,R^j\pi_*\underline{\ZZ}{}_2)\Rightarrow H^{i+j}(M,\ZZ_2)$.
By Theorem \ref{thm:Kollar}, we have the cohomology of $X$ as $H^1(X,\ZZ_2)=0$,
$H^2(X,\ZZ_2)=\ZZ_2^{k+1}$ and $H^3(X,\ZZ_2)=0$, where $k+1=b_2(X)$.

Now let $\cF=R^1\pi_*\underline{\ZZ}{}_2$. All fibers have $H^1(\pi^{-1}(x),\ZZ_2)=\ZZ_2$, so $\cF(U)=\ZZ_2$ for
any small ball $U\subset X$. If $x\in D_i$ is an isotropy set with $m_i$ odd, and $V\subset U$ then $\cF(U)\to \cF(V)$ is an isomorphism.
If $m_i$ is even then $\cF(U)\to \cF(U-D_i)$ is the zero map. Relabel the $m_i$ so that $m_1,\ldots, m_c$
are even, and denote $D=\bigsqcup_{i=1}^c D_i$. Note that the union is
disjoint. Let $j:X-D\to X$ and $i:D\to X$ denote the inclusion. Then the previous discussion means that 
 $$
  \cF= j_! \underline{\ZZ}{}_2\oplus i_* \underline{\ZZ}{}_2
  $$
Note also that by Theorem \ref{thm:Kollar}(2), reducing mod $2$, we have that $[D_1],\ldots, [D_c]$ are
linearly independent in $H^2(X,\ZZ_2)$.

Let us start first by the case where all $m_i$ are odd. Then $F=\underline{\ZZ}{}_2$, and so the Leray
spectral sequence is
\begin{center}
\begin{tikzpicture}
  \matrix (m) [matrix of math nodes,
    nodes in empty cells,nodes={minimum width=8ex,
    minimum height=5ex,outer sep=-5pt},
    column sep=1ex,row sep=1ex]{
          \ZZ_2  & 0 & \ZZ_2^{k+1} &  0  & \ZZ_2 \\ 
          \ZZ_2  & 0 & \ZZ_2^{k+1} &  0  & \ZZ_2 \\ 
   };
  \draw[-stealth] (m-1-2.south east) -- (m-2-4.north west);
  \draw[-stealth] (m-1-3.south east) -- (m-2-5.north west);
  \draw[-stealth] (m-1-1.south east) -- (m-2-3.north west);
\end{tikzpicture}
\end{center}  
The map $H^0(X,\cF)\to H^2(X,\ZZ_2)$ is given by cup product with the Chern class
$c_1(M/m)=m\, c_1(M) = c_1(B)+ \sum b_i [D_i] \pmod2$ (using that $m$ is odd).

Now assume that $c>0$. Clearly $\pi^*[D_i]=m_i[E_i]=0 \pmod2$, so $\ker\pi^*$ is at least $c$-dimensional.  
Now we compute $H^k(X,\cF)$. 
Clearly 
 $$
 H^0(X,j_*\underline{\ZZ}{}_2)=\ZZ_2^c,  H^1(X,j_*\underline{\ZZ}{}_2)=\ZZ_2^{2(g_1+\ldots+g_c)}, 
 H^2(X,j_*\underline{\ZZ}{}_2)=\ZZ_2^c
  $$
Using the exact sequence $j_!\underline{\ZZ}{}_2 \to \underline{\ZZ}{}_2\to j_*\underline{\ZZ}{}_2$, 
and that the map $H^2(X,\ZZ_2) \to \bigoplus_{i=1}^c H^2(D_i,\ZZ_2)$ is surjective,
we get
 \begin{align*}
  & H^0(X,j_!\underline{\ZZ}{}_2)=0,  H^1(X,j_!\underline{\ZZ}{}_2)=\ZZ_2^{c-1},
 H^2(X,j_!\underline{\ZZ}{}_2)=\ZZ_2^{2(g_1+\ldots+g_c)+k+1-c}, \\ 
 &  H^3(X,j_!\underline{\ZZ}{}_2)=0, H^4(X,j_!\underline{\ZZ}{}_2)=\ZZ_2
 \end{align*}
So the $E_2$ term of the Leray spectral sequence becomes
\begin{center}
\begin{tikzpicture}
  \matrix (m) [matrix of math nodes,
    nodes in empty cells,nodes={minimum width=12ex,
    minimum height=5ex,outer sep=-5pt},
    column sep=1ex,row sep=1ex]{
          \ZZ_2^c  & \ZZ_2^{2(g_1+\ldots+g_c)+c-1} & \ZZ_2^{2(g_1+\ldots+g_c)+k+1} &  0  & \ZZ_2 \\ 
          \ZZ_2  & 0 & \ZZ_2^{k+1} &  0  & \ZZ_2 \\ 
   };
  \draw[-stealth] (m-1-2.south east) -- (m-2-4.north west);
  \draw[-stealth] (m-1-3.south east) -- (m-2-5.north west);
  \draw[-stealth] (m-1-1.south east) -- (m-2-3.north west);
\end{tikzpicture}
\end{center}  
Therefore the map $\ZZ_2^c\to \ZZ_2^{k+1}$ must be injective, and the  map $\pi^*:H^2(X,\ZZ_2)\to H^2(M,\ZZ_2)$ has
image on the term $E^{2,0}_2/d_2(E^{0,1})\cong \ZZ_2^{k+1-c}$.
Note that we recover the mod $2$ cohomology of $M$, as stated.
\end{proof}

%%%%%%%%%%%%%%%%%%%%%%%%%%%%%%%%%%%
\section{Proof of  Theorem \ref{thm:main1}}
%%%%%%%%%%%%%%%%%%%%%%%%%%%%%%%%%%%

The proof of the main theorem is a corollary to the results below.

\begin{theorem}\label{thm:sums1} 
Let $m_i \geq 2$, $g_i\geq 1$, with $m_i$ pairwise coprime, $1\leq i\leq r$, and with $(m_i,g_i)\neq($even,odd$)$. 
Let $M$ be a Smale-Barden manifold with $H_2(M,\ZZ)=\ZZ \oplus \left(\bigoplus_{i=1}^r \ZZ_{m_i}^{2g_i}\right)$. 
\begin{itemize}
\item If $M$ is spin then $M$ admits a semi-regular Sasakian structure. 
\item If $i(M)=\infty$ and some $m_i$ is even then $M$
admits a semi-regular Sasakian structure.
\end{itemize}
\end{theorem}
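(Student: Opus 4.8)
The plan is to realize $M$ as the total space of a semi-regular Seifert bundle $\pi:M\to X$ over a Kähler orbifold whose underlying space is the rational surface $X=\PP^1\times\PP^1$, equipped with isotropy curves $D_i$ of genus $g_i$ and multiplicity $m_i$. Since $b_2(X)=2$, Theorem \ref{thm:Kollar} will give $k=b_2(X)-1=1$ and $H_2(M,\ZZ)=\ZZ\oplus\bigoplus_i\ZZ_{m_i}^{2g_i}$, so by the classification (Theorem \ref{thm:sb-classification}) it suffices to produce a simply connected closed $5$-manifold with the prescribed pair $\bigl(H_2(M,\ZZ),i(M)\bigr)$ carrying such a structure. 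First I would fix, for each $i$, a smooth complex curve $D_i$ of bidegree $(a_i,b_i)$ with $(a_i-1)(b_i-1)=g_i$, taking $(a_i,b_i)=(2,g_i+1)$ when $m_i$ is odd and $(a_i,b_i)=(g_i+1,2)$ when $m_i$ is even; in the even case the hypothesis $(m_i,g_i)\neq(\text{even},\text{odd})$ forces $g_i$ to be even, so $g_i+1$ is odd and $\gcd(a_i,b_i)=1$. Generic members are pairwise transversal with $D_i^2=2a_ib_i>0$, and as the $m_i$ are pairwise coprime, Proposition \ref{prop:1} endows $X$ with the structure of a smooth Kähler orbifold with isotropy surfaces $D_i$ of multiplicities $m_i$.

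Next I would choose orbit invariants $0<b_i<m_i$ with $j_ib_i\equiv1\pmod{m_i}$ and a line bundle $B$, forming via Proposition \ref{prop:orb-inv} the Seifert bundle with $c_1(M/X)=c_1(B)+\sum_i\tfrac{b_i}{m_i}[D_i]$. Choosing $c_1(B)$ ample and large makes $c_1(M/X)$ an ample, hence Kähler, orbifold class, so Theorem \ref{thm:seifert} yields a quasi-regular Sasakian structure, which is semi-regular because $X$ is a smooth orbifold. To obtain $H_1(M,\ZZ)=0$ I would verify the three conditions of Theorem \ref{thm:Kollar}: condition (1) holds since $X$ is simply connected; condition (2) reduces, by the pairwise coprimality (a subdirect product of groups of coprime orders is the full product), to surjectivity of $H^2(X,\ZZ)\to H^2(D_i,\ZZ_{m_i})$ onto each factor, which holds because the intersection numbers $\alpha\cdot D_i$ generate $\gcd(a_i,b_i)\ZZ$ with $\gcd(a_i,b_i)$ coprime to $m_i$ by the choice above; and condition (3), primitivity of $c_1(M/m)=m\,c_1(B)+\sum_i\tfrac{m}{m_i}b_i[D_i]\in H^2(X,\ZZ)\cong\ZZ^2$, can be arranged by a suitable choice of $c_1(B)$. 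Simple connectivity then follows: Proposition \ref{prop:abelian} gives that $\pi_1(X-\bigcup_iD_i)$, and hence $\pi_1^{\orb}(X)$, is abelian, so the exact sequence $\ZZ\to\pi_1(M)\to\pi_1^{\orb}(X)\to1$ with central image of $\ZZ$ exhibits $\pi_1(M)$ as a $2$-step nilpotent group; being nilpotent with $\pi_1(M)^{\mathrm{ab}}=H_1(M,\ZZ)=0$, it is trivial.

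It remains to realize the prescribed Barden invariant. By Proposition \ref{prop:i} the orientable isotropy curves force $i(M)\in\{0,\infty\}$, and by \eqref{eqn:w2M} with $w_2(X)=0$ we have $w_2(M)=\pi^*\bigl(\sum_ib_i[D_i]+c_1(B)\bigr)$, evaluated through $\ker\pi^*$ by Proposition \ref{prop:w2}. When all $m_i$ are odd, $\ker\pi^*=\langle c_1(B)+\sum_ib_i[D_i]\rangle$ already contains this class, so $w_2(M)=0$ and $M$ is automatically spin, covering the first bullet. When some $m_i$, say $m_1$, is even, $\ker\pi^*=\langle[D_1]\rangle$ and $\im\pi^*$ is the summand $\ZZ_2^k\subset H^2(M,\ZZ_2)$ dual to the free part of $H_2(M,\ZZ)$; since $H^2(X,\ZZ)\surj H^2(X,\ZZ_2)=\ZZ_2^2$ and $[D_1]\not\equiv0$, I can choose the reduction of $c_1(B)$ mod $2$ so that $\sum_ib_i[D_i]+c_1(B)$ either lies in $\ker\pi^*$, giving $w_2(M)=0$ and $i(M)=0$, or avoids it, giving $w_2(M)\neq0$ on that summand and hence $i(M)=\infty$. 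These choices are compatible: fixing the residue of $c_1(B)$ mod $2$ still leaves enough freedom (adding even, and large ample, classes) to enforce ampleness and primitivity. This is precisely where the hypothesis ``some $m_i$ even'' of the second bullet is essential, since over $\PP^1\times\PP^1$ with all $m_i$ odd the class $w_2(M)$ is forced to vanish, so $i(M)=\infty$ is unreachable.

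The hard part will be the simultaneous fulfillment of the arithmetic constraints on the curves: realizing each prescribed genus $g_i$ by a smooth curve whose intersection numbers stay coprime to $m_i$ (Theorem \ref{thm:Kollar}(2)) is exactly what compels the parity hypothesis $(m_i,g_i)\neq(\text{even},\text{odd})$, and one must check that imposing this together with primitivity of $c_1(M/m)$, ampleness of $c_1(M/X)$, and the prescribed mod $2$ value of $w_2(M)$ is internally consistent. The Stiefel--Whitney computation through Proposition \ref{prop:w2}, rather than the Kähler geometry, is the delicate step, as it pins down the Barden invariant and accounts for the asymmetry between the two bullets.
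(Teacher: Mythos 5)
Your proposal is correct and follows essentially the same route as the paper's proof: a semi-regular Seifert bundle over $\CP^1\times\CP^1$ with smooth curves of bidegree $(2,g_i+1)$ (your swap to $(g_i+1,2)$ for even $m_i$ is immaterial by the symmetry of the two factors), verification of the conditions of Theorem \ref{thm:Kollar} via the divisibility analysis that forces $(m_i,g_i)\neq(\text{even},\text{odd})$, and the identification of the Barden invariant through Proposition \ref{prop:w2} and the parity of $c_1(B)$, including the observation that all $m_i$ odd forces $M$ spin. The only difference is presentational: where the paper solves explicit Diophantine equations in $\b_1,\b_2,b_i$ (such as $m\b_1+\sum 2M_ib_i=1$) to obtain primitivity of $c_1(M/m)$ simultaneously with the prescribed value of $w_2(M)$, you invoke a correct but less explicit ``enough freedom after fixing residues mod $2$'' argument.
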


\begin{proof}
Take the Kähler manifold $X=\CP^1 \times \CP^1$. 
The second cohomology can be written as $H^2(X,\ZZ)=\ZZ\la H_1,H_2\ra$, where $H_1,H_2$ are the classes
of the two factors. So $H_1\cdot H_2=1$. The canonical class is $K_X=-2H_1-2H_2$, so $w_2(X)=0$ and $X$ is spin. 
Now take a collection of curves $D_i\subset X$ such that
 \begin{equation}\label{eqn:Di}
  [D_i]= 2H_1+(g_i+1)H_2.
 \end{equation}
The genus of $D_i$ equals $g_i$, which can be checked with the adjunction formula: 
 $$
  K_X\cdot D_i+ D_i^2=-4-2(g_i+1)+ 4(g_i+1)=2g_i-2.
  $$
For $g_i\geq 1$ and a generic section, $D_i$ is smooth. This holds since the divisor $a_1H_1+a_2H_2$ is very
ample for $a_1,a_2\geq 1$, and using Bertini's theorem. Moreover, the divisors $D_i$ can be taken to intersect transversally by 
 Bertini's theorem. 

We put coefficients $m_i$ to each of $D_i$ by using  Proposition \ref{prop:1} in order to get a Kähler orbifold.
For the given orbit invariants $b_i$ and line bundle $B$, we have by Proposition \ref{prop:orb-inv} a Seifert bundle
 \begin{equation}\label{eqn:piMX}
 \pi:M\to X
 \end{equation}
that admits a semi-regular Sasakian structure if $c_1(M/X)$ defined by (\ref{eqn:c1MX}) is an orbifold Kähler class.
  We need to compute 
 $$
 c_1(M/X)= c_1(B)+ \sum \frac{b_i}{m_i}[D_i].
 $$
For a Kähler structure, we have $[\omega]=a_1 H_1+ a_2 H_2$, with $a_1,a_2>0$, so it is enough
to choose $c_1(B)=\b_1H_1+\b_2H_2$ in a way to get $c_1(M/X)=a_1H_1+a_2H_2$, $a_1,a_2> 0$. 

 The fundamental group of $\pi_1(X-(D_1\cup\ldots D_r))$ is abelian by Proposition \ref{prop:abelian}.
 Hence $\pi_1^{\orb}(X)$ and also $\pi_1(M)$ are abelian.
 It only remains to prove that $H_1(M,\ZZ)=0$ to know that $M$ is simply-connected. This follows from Theorem
 \ref{thm:Kollar} if $c_1(M/m)$ is primitive, where $m=\lcm(m_i)=\prod m_i$, and if 
 $$
  H^2(X, \ZZ) \to \bigoplus H^2(D_i,\ZZ_{m_i})
  $$
is surjective. As all $m_i$ are coprime, the surjectivity above is equivalent to the surjectivity of
each individual map $H^2(X, \ZZ) \to H^2(D_i,\ZZ_{m_i})$, and this happens if the divisibility of $D_i$ is coprime with $m_i$.
By (\ref{eqn:Di}) the class $[D_i]$ is primitive if $g_i$ is even, and the divisibility is $2$ if $g_i$ is odd. Therefore
we need to assume that $(g_i,m_i) \neq ($odd,even$)$.

Next we need to compute
 \begin{align*}
 c_1(M/m) &=m c_1(B)+ \sum \frac{m}{m_i} b_i [D_i] \\
  &= (m\b_1+ \sum 2M_ib_i) H_1 + (m\b_2 + \sum (g_i+1)M_i b_i) H_2,
 \end{align*}
where $M_i=m/m_i$, to be primitive. We have the following cases:
 \begin{itemize}
  \item If all $m_i$ are odd, that is $m$ is odd. Then $\gcd(2M_1,\ldots, 2M_r,m)=1$, and we can solve the equation
$m\b_1+ \sum 2M_ib_i=1$, thereby getting a primitive class. Now choosing any $\b_2$ so that 
$m\b_2 + \sum (g_i+1)M_i b_i>0$, we get an orbifold Kähler class, and the manifold $M$ in (\ref{eqn:piMX}) is 
semi-regular Sasakian with $\pi_1(M)=0$.

\item If $m$ is even, then we can assume that $m_1$ is even and $m_i$ is odd for $i>1$. Then $g_1$ is also even
by our assumption. Setting $\b_1=0$, we can solve
$\sum 2M_ib_i=2$. Now $\sum (g_i+1)M_i b_i= (\sum g_i M_ib_i )+ 1$ is
odd, so taking any $\b_2$ such that 
$m\b_2 + \sum (g_i+1)M_i b_i>0$, we get an orbifold Kähler class, and the manifold $M$ in (\ref{eqn:piMX}) is 
semi-regular Sasakian with $\pi_1(M)=0$.

We can also take $\b_1=1$ and solve $m+\sum 2M_ib_i=2$, and work to get the same result.
\end{itemize}

Now let us compute $w_2(M)$. If $m$ is odd, then (\ref{eqn:w2}) says that
$w_2(M)=\pi^*w_2(X)+\sum_i(m_i-1)[E_i]=0$ since $w_2(X)=0$, so $M$ is spin.

If $m$ is even, then arrange $m_1$ even and the others are odd. By Proposition \ref{prop:w2}, 
we have that $\pi^*:H^2(X,\ZZ_2)\to H^2(M,\ZZ_2)$ has one-dimensional kernel spanned by
$[D_1]$. As $m_1$ is even we have that $g_1$ is even, by the assumption of the statement. 
Hence $\pi^*[D_1]=\pi^*[H_2]=0$. Therefore (\ref{eqn:Di}) implies that also $\pi^*[D_j]=0$ 
for all $j\geq 1$. As $w_2(X)=0$, we have that (\ref{eqn:w2M}) implies that 
 $$
  w_2(M)=\pi^*(w_2(X)+\sum_ib_i[D_i])+\pi^*c_1(B)=\pi^*c_1(B)=\b_1 \, \pi^*[H_1].
  $$
Taking either $\b_1=0$ or $\b_1=1$ we get $M$ spin or non-spin, as desired.
\end{proof}

\begin{theorem}\label{thm:sums2}
Assume that we are given $m_i \geq 2$, $g_i\geq 1$, with $m_i$ pairwise coprime, $1\leq i\leq r$, and with $(m_i,g_i)\neq($even,even$)$. 
Let $M$ be a Smale-Barden manifold with $H_2(M,\ZZ)=\ZZ \oplus \left(\bigoplus_{i=1}^r \ZZ_{m_i}^{2g_i}\right)$. 
\begin{itemize}
\item If $M$ is non-spin then $M$ admits a semi-regular Sasakian structure. 
\item If $i(M)=0$ and some $m_i$ is even then $M$ admits a semi-regular Sasakian structure. 
\end{itemize}
\end{theorem}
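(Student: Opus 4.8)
The plan is to run the same Seifert construction as in Theorem \ref{thm:sums1}, but over a \emph{non-spin} base, so that the nonzero second Stiefel--Whitney class of the base can be absorbed into $w_2(M)$. Concretely, I would take $X=\mathbb{F}_1$, the first Hirzebruch surface (the nontrivial $\CP^1$-bundle over $\CP^1$, equivalently $\CP^2$ blown up at one point). It is a simply connected K\"ahler surface with $b_2(X)=2$, hence $k=1$ as required, and it is non-spin. Writing $H^2(X,\ZZ)=\ZZ\la C_0,F\ra$ with $C_0^2=-1$, $F^2=0$, $C_0\cdot F=1$, and $K_X=-2C_0-3F$, I would choose the curve classes
\begin{equation*}
[D_i]=2C_0+(g_i+2)F .
\end{equation*}
Then $D_i+K_X=(g_i-1)F$, so $D_i\cdot(D_i+K_X)=2(g_i-1)$ and the genus of $D_i$ is $g_i$. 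The class is very ample for $g_i\geq 1$, so by Bertini a generic member is smooth and the $D_i$ may be taken to meet transversally; since $D_i^2=4(g_i+1)>0$, Proposition \ref{prop:abelian} applies and $\pi_1^{\orb}(X)$, hence $\pi_1(M)$, is abelian.

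The parity hypothesis enters first through the surjectivity in Theorem \ref{thm:Kollar}(2). The divisibility of $[D_i]$ is $\gcd(2,g_i+2)$, which is $1$ when $g_i$ is odd and $2$ when $g_i$ is even, so $H^2(X,\ZZ)\to H^2(D_i,\ZZ_{m_i})$ is onto exactly when the divisibility is coprime to $m_i$, i.e.\ precisely when $(g_i,m_i)\neq(\text{even},\text{even})$. This is the mirror image of the situation on $\CP^1\times\CP^1$ in Theorem \ref{thm:sums1}, where odd $g_i$ produced the non-primitive class; this is exactly why the two theorems carry complementary parity hypotheses. I would then arrange primitivity of $c_1(M/m)=m\,c_1(B)+\sum M_ib_i[D_i]$ (with $M_i=m/m_i$) together with positivity of $c_1(M/X)$, following the two-case argument of Theorem \ref{thm:sums1} verbatim. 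Writing $c_1(B)=\b_1C_0+\b_2F$, the $C_0$-coefficient of $c_1(M/m)$ is $m\b_1+\sum 2M_ib_i$ and the $F$-coefficient is $m\b_2+\sum(g_i+2)M_ib_i$. When $m$ is odd I would normalize the $C_0$-coefficient to $1$; when $m$ is even I would set $\b_1=0$ and solve $\sum M_ib_i=1$, so that the $C_0$-coefficient is $2$ and — using that the unique even $m_i$, say $m_1$, forces $g_1$ odd — the $F$-coefficient is odd, giving a primitive class. In both cases $\b_2$ is chosen large enough that $c_1(M/X)=aC_0+bF$ lands in the K\"ahler cone $\{a>0,\ b>a\}$ of $\mathbb{F}_1$, producing an orbifold K\"ahler class and hence a semi-regular Sasakian structure via Theorem \ref{thm:seifert}.

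The heart of the matter, and the only genuinely new ingredient, is the Stiefel--Whitney computation. Reducing mod $2$ one has $w_2(X)=[F]$ and $[D_i]\equiv g_i[F]$. Since the $m_i$ are pairwise coprime, at most one of them is even. If $m_1$ is even then $g_1$ is odd, so $[D_1]\equiv[F]=w_2(X)$, and by Proposition \ref{prop:w2} the kernel of $\pi^*$ is exactly $\la[D_1]\ra=\la[F]\ra$. Thus $w_2(X)\in\ker\pi^*$, so $\pi^*w_2(X)=0$; as every $[D_i]$ and the $F$-part of $c_1(B)$ are multiples of $[F]\in\ker\pi^*$, formula (\ref{eqn:w2M}) collapses to
\begin{equation*}
w_2(M)=\b_1\,\pi^*[C_0],
\end{equation*}
with $\pi^*[C_0]\neq 0$. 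Taking $\b_1=0$ gives a spin $M$ with $i(M)=0$ (the second bullet), while $\b_1=1$ gives $w_2(M)\neq 0$, hence a non-spin $M$ with $i(M)=\infty$ by Proposition \ref{prop:i} (the first bullet in the presence of an even $m_i$; primitivity is preserved since the $F$-coefficient stays odd). When all $m_i$ are odd, the primitivity normalization forces $\b_1$ odd, so $\ker\pi^*=\la c_1(B)+\sum b_i[D_i]\ra$ acquires a nonzero $C_0$-component and cannot contain $[F]=w_2(X)$; therefore $w_2(M)=\pi^*w_2(X)\neq 0$ and $M$ is non-spin, settling the remaining case of the first bullet.

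I expect the last step to be the main obstacle: because $X$ is non-spin, $\pi^*w_2(X)$ must be actively controlled, and one must verify that primitivity of $c_1(M/m)$, positivity of $c_1(M/X)$, and the prescribed value of $w_2(M)$ can all be met by a single choice of the data $(b_i,\b_1,\b_2)$. The hypothesis $(g_i,m_i)\neq(\text{even},\text{even})$ is precisely what unlocks this: it forces the even-multiplicity divisor class to reduce to $w_2(X)$ modulo $2$, so that $w_2(X)$ is swallowed by $\ker\pi^*$ and the spin/non-spin choice decouples cleanly into the single parameter $\b_1$, exactly as $w_2(M)=\b_1\,\pi^*[H_1]$ did in Theorem \ref{thm:sums1}.
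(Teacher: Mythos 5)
Your construction is the paper's own proof written in different coordinates: your $X=\mathbb{F}_1$ is the paper's $\CP^2\#\overline{\CP}{}^2$ with $C_0=E$, $F=H-E$, and your classes $[D_i]=2C_0+(g_i+2)F$ are exactly the paper's $d_iH-(d_i-2)E$ with $d_i=g_i+2$, so the genus and divisibility computations, the K\"ahler-cone conditions, the two-case primitivity analysis, and the $w_2$ computation via Proposition \ref{prop:w2} all coincide step by step with the paper's.

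The one place where you genuinely deviate, and where your justification is insufficient, is the non-spin construction when some $m_i$ is even. You keep the $b_i$ fixed (chosen so that $\sum M_ib_i=1$) and toggle $\beta_1$ from $0$ to $1$, so the $C_0$-coefficient of $c_1(M/m)$ jumps from $2$ to $m+2$; the claim that ``primitivity is preserved since the $F$-coefficient stays odd'' is then not enough, because $\gcd(m+2,\,\text{odd})$ can exceed $1$ (an odd prime can divide both $m+2$ and the $F$-coefficient, e.g.\ $3$ when $m=10$). The paper's device is different: it changes the parity of the relevant component of $c_1(B)$ \emph{and} re-solves the linear equation (\ref{eqn:co-even}) for the $b_i$, so that the difference of the two coefficients remains equal to $2$; since the first coefficient is automatically odd (being $\equiv M_1b_1d_1 \pmod 2$), this gives $\gcd=1$ directly. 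Your step is repairable -- for instance, choose $\beta_2$ so that the $F$-coefficient lies in a unit residue class modulo the odd part of $m+2$, which is possible because $\gcd(m,m+2)=2$ -- but as written it is a gap. (A smaller issue, which you share with the paper's proofs of both Theorem \ref{thm:sums1} and this theorem: the equation $\sum M_ib_i=1$ has no solution with $0<b_i<m_i$ once $r\geq 2$, since each summand is a positive integer; the intended reading is to solve the corresponding congruence mod $m$ and absorb the excess into the line bundle $B$, or to argue coprimality by varying $\beta_1,\beta_2$ along arithmetic progressions as above.)
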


\begin{proof}
Consider the projective plane $\CP^2$ blown-up at a point, $X=\CP^2\#\overline{\CP}{}^2$. This is a Kähler manifold with
$H_2(X,\ZZ)$ generated by the hyperplane class $H$ an the exceptional divisor $E$, $H^2=1$, $H\cdot E=0$, $E^2=-1$. 
Consider plane curves $D_i'$ of degree $d_i\geq 3$ which are smooth except for 
a point of multiplicity $d_i-2$ with different branches. After blowing-up, the proper transform is a
smooth curve 
 \begin{equation}\label{eqn:Di2}
 D_i\equiv d_iH-(d_i-2)E
 \end{equation}
Note that the divisor $d_i H-(d_i-2)E$ is very ample on $X$ since $H-E$ is a movable divisor 
and $2H-E$ is very ample, and adding very ample to movable
gives very ample divisors. This implies that we can choose $D_i$ to be smooth. Moreover we can
choose $D_i$ to intersect transversally. 

As $K_X=-3H+E$, we have that the genus of $D_i$ is given by
 $$
  2g_i-2=K_X\cdot D_i +D_i^2= -3d_i+d_i-2+d_i^2-(d_i-2)^2=2d_i-6,
  $$
 so $g_i=d_i-2\geq 1$.
Now put isotropy $m_i$ over $D_i$ and make it an orbifold using  Proposition \ref{prop:1}. 
Given orbit invariants $b_i$ and a line bundle $B$, we have by Proposition \ref{prop:orb-inv} a Seifert bundle
 \begin{equation}\label{eqn:piMX2}
 \pi:M\to X
 \end{equation}
that admits a semi-regular Sasakian structure if $c_1(M/X)$ defined by (\ref{eqn:c1MX}) is an orbifold Kähler class.
We compute 
 $$
 c_1(M/X)= c_1(B)+ \sum \frac{b_i}{m_i}[D_i]. 
 $$
For a Kähler structure, we have $[\omega]=a_1 H- a_2 E$, with $a_1>a_2>0$, so it is enough
to choose $c_1(B)=\b_1H_1+\b_2H_2$ with $c_1(M/X)=a_1H- a_2E$, $a_1>a_2> 0$. 

 The fundamental group of $\pi_1(X-(D_1\cup\ldots D_r))$ is abelian by Proposition \ref{prop:abelian}.
 Hence $\pi_1^{\orb}(X)$ and also $\pi_1(M)$ are abelian.
 It only remains to prove that $H_1(M,\ZZ)=0$ to know that $M$ is simply-connected. This follows from Theorem
 \ref{thm:Kollar} if $c_1(M/m)$ is primitive, where $m=\lcm(m_i)=\prod m_i$, and if 
 $$
  H^2(X, \ZZ) \to \bigoplus H^2(D_i,\ZZ_{m_i})
  $$
is surjective. As all $m_i$ are coprime, the surjectivity above is equivalent to the surjectivity of
each map $H^2(X, \ZZ) \to H^2(D_i,\ZZ_{m_i})$, and this happens if the divisibility of $D_i$ is coprime with $m_i$.
By (\ref{eqn:Di2}) the class $[D_i]$ is primitive if $g_i$ is odd since then $d_i$ is odd, and it has divisibility $2$ if $g_i$ is even. 
Therefore we need to assume that $(g_i,m_i) \neq ($even,even$)$.

Next we compute
 \begin{align*}
 c_1(M/m) &=m c_1(B)+ \sum \frac{m}{m_i} b_i [D_i] \\
  &= (m\b_1+ \sum M_ib_id_i) H - (m\b_2 + \sum M_i b_i(d_i-2)) E,
 \end{align*}
where $M_i=m/m_i$, to be primitive. Let $a_1=m\b_1+ \sum M_ib_id_i$ and $a_2=m\b_2 + \sum M_i b_i(d_i-2)$.
Note that $\gcd(a_1,a_2)=\gcd(a_1,a_2-a_1)$, where $a_2-a_1=m(\b_1-\b_2)+ \sum 2M_ib_i$.
We have the following cases:
 \begin{itemize}
  \item If all $m_i$ are odd, that is $m$ is odd. Then we can solve the equation 
  \begin{equation}\label{eqn:co-odd}
  m(\b_1-\b_2)+ \sum 2M_ib_i=1 
  \end{equation}
  thereby getting a primitive class. Now choose $\b_2$ so that $a_2=m\b_2 + \sum M_i b_i(d_i-2)>0$. 
We get an orbifold Kähler class, and the manifold $M$ in (\ref{eqn:piMX2}) is 
semi-regular Sasakian with $\pi_1(M)=0$.

\item If $m$ is even, then we can assume that $m_1$ is even and $m_i$ is odd for $i>1$. Then $g_1$ must be odd
by our assumption. We can solve 
\begin{equation}\label{eqn:co-even}
m(\b_1-\b_2)+ \sum 2M_ib_i=2.
\end{equation}
 As $m_1$ is even, it must be $b_1$ odd, hence
$a_1=m\b_1+ \sum M_ib_id_i \equiv M_1b_1d_1 \equiv 1\pmod 2$.
 Taking $\b_2$ large, we get again $a_2>0$
and hence an orbifold Kähler class. The manifold $M$ in (\ref{eqn:piMX2}) is 
semi-regular Sasakian with $\pi_1(M)=0$.

\end{itemize}

\medskip

Now let us compute $w_2(M)$. 
Suppose first that $m$ is odd, that is all $m_i$ are odd. Then (\ref{eqn:w2}) says that $w_2(M)=\pi^* w_2(X)$.
Since $K_X=-3H+E$, we have that $w_2(X)=H+E$. The images of $[D_i]$ in $H^2(X,\ZZ_2)$ are $d_iH-(d_1-2)E \equiv
d_i(H+E) \pmod 2$, hence all in the line $\la (H+E)\ra \subset H^2(X,\ZZ_2)$. By Proposition \ref{prop:w2}, the kernel
of $\pi^*:H^2(X,\ZZ_2) \to H^2(M,\ZZ_2)$ is generated by $c_1(B)+\sum b_i[D_i]$, hence $w_2(X)\in \ker \pi^*$ if
and only if $c_1(B)\in \la (H+E)\ra$. As $c_1(B)=\b_1H-\b_2E$, this is rewritten as $\b_1-\b_2\equiv 0 \pmod2$. By (\ref{eqn:co-odd})
we have that $\b_1-\b_2$ is odd, hence $w_2(X)\not\in \ker \pi^*$, $w_2(M)\neq 0$ and $M$ is non-spin.

Now assume that $m$ is even. Arrange that $m_1$ even and the others are odd. By Proposition \ref{prop:w2}, 
we have that $\pi^*:H^2(X,\ZZ_2)\to H^2(M,\ZZ_2)$ has the one-dimensional kernel spanned by
$[D_1]$. As $m_1$ is even we have that $g_1$ is odd, by the assumption of the statement, so that $[D_1]=H+E$.
Now (\ref{eqn:w2M}) says that 
 $$
 w_2(M)=\pi^*(w_2(X)+\sum_i b_i[D_i] + c_1(B))=\pi^*(\b_1 H+\b_2 E),
 $$
using that $w_2(X)=H+E\in \ker \pi^*$ and $[D_i]=d_i(H+E)\in \ker \pi^*$. Therefore $w_2(M)$ is zero or non-zero
according to the parity of $\b_1-\b_2$.
We can solve (\ref{eqn:co-even}) with $\b_1-\b_2=0$ to get $w_2(M)=0$, and we can also solve
(\ref{eqn:co-even}) with $\b_1-\b_2=1$ to get $w_2(M)\neq 0$. So $M$ can be spin or non-spin.
\end{proof}

Our last tool serves to increase the rank $k$ without modifying the torsion.

\begin{theorem}\label{thm:highrank}
If $M$ is a simply-connected Smale-Barden manifold with a semi-regular Sasakian structure, then 
$M\# X_\infty$ and $M\# M_\infty$ are also semi-regular Sasakian manifolds.
\end{theorem}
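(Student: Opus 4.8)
The plan is to realize $M$ as a Seifert bundle over a smooth Kähler orbifold, blow up the base at a point off the isotropy to raise $b_2$ by one, and transport the Seifert data to the blown-up base. By Theorem \ref{thm:seifert} the semi-regular Sasakian structure gives a Seifert bundle $\pi\colon M\to X$ over a smooth Kähler orbifold $X$ with isotropy surfaces $D_i$ of multiplicity $m_i$, genera $g_i$, and orbit invariants $b_i$. Since $\pi_1(M)=1$, the homotopy sequence $\ZZ=\pi_1(S^1)\to\pi_1(M)\to\pi_1^{\orb}(X)\to 1$ forces $\pi_1^{\orb}(X)=1$, so Theorem \ref{thm:Kollar} applies. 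I then take $\rho\colon\widehat X=\mathrm{Bl}_p X\to X$, the blow-up at a smooth point $p\notin\bigcup_i D_i$, with exceptional curve $E$, $E^2=-1$. Then $\widehat X$ is again a smooth Kähler orbifold with isotropy $\{(\widehat D_i,m_i)\}$, where $\widehat D_i=\rho^*D_i$ is disjoint from $E$, has the same genus $g_i$, and the $\widehat D_i$ still meet transversally with pairwise coprime multiplicities; moreover $b_2(\widehat X)=b_2(X)+1$ and $\pi_1^{\orb}(\widehat X)=\pi_1^{\orb}(X)=1$, because blowing up a smooth point off the isotropy does not change the orbifold fundamental group.

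Next I build the new Seifert bundle. Using Proposition \ref{prop:orb-inv} with the same $(m_i,j_i,b_i)$ and a line bundle $\widehat B$ with $c_1(\widehat B)=\rho^*\g-E$, I set
$$
 c_1(\widehat M/\widehat X)=c_1(\widehat B)+\sum_i\frac{b_i}{m_i}\widehat D_i=\rho^*\Theta-E,
 \qquad \Theta=\g+\sum_i\frac{b_i}{m_i}D_i,
$$
and choose $\g=\g'+2NA$ with $A$ ample on $X$ and $N$ large, so that $\Theta$ is ample with Seshadri constant at $p$ exceeding $1$ and $\rho^*\Theta-E$ is an orbifold Kähler class; by Theorem \ref{thm:seifert} the bundle $\widehat M\to\widehat X$ then carries a semi-regular Sasakian structure. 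Simple connectivity is checked through Theorem \ref{thm:Kollar}: condition (1) holds since $H_1(\widehat X)=0$; condition (2) is inherited from $X$ because $\widehat D_i=\rho^*D_i$ and $E\cdot\widehat D_i=0$, so $H^2(\widehat X,\ZZ)\to\bigoplus H^2(\widehat D_i,\ZZ_{m_i})$ factors through the surjection for $X$; and condition (3) is automatic, as $c_1(\widehat M/m)=\rho^*(m\Theta)-mE$ has $E$-coefficient $-m$ while, for each prime $q\mid m$, $m\Theta\equiv M_sb_s[D_s]\not\equiv 0\pmod q$ (here $M_i=m/m_i$ and $q\mid m_s$), by the same divisibility that made $M$ simply connected, so the content of $c_1(\widehat M/m)$ is prime to every $q\mid m$ and divides $m$, hence equals $1$. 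Combined with $\pi_1^{\orb}(\widehat X)=1$ this gives $\pi_1(\widehat M)=H_1(\widehat M)=0$, and Theorem \ref{thm:Kollar} yields $H_2(\widehat M)=H_2(M)\oplus\ZZ$ (same torsion, rank raised by one).

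It remains to read off the diffeomorphism type. By Proposition \ref{prop:i} we have $i(\widehat M)\in\{0,\infty\}$, and for Smale--Barden manifolds with $i\in\{0,\infty\}$ one has $i=0$ exactly when the manifold is spin and $i=\infty$ exactly when it is not (the building blocks $X_0,M_k,M_\infty$ are spin, only $X_\infty$ is not). I compute $w_2(\widehat M)$ from (\ref{eqn:w2M}) using $w_2(\widehat X)=\rho^*w_2(X)+E\pmod 2$ and Proposition \ref{prop:w2}; the upshot is that, with the coefficient of $E$ fixed at $1$, the residue $\g\bmod 2$ toggles $w_2(\widehat M)$ between $0$ and a nonzero class. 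Prescribing the parity of $\g'$ (with the large multiple taken to be the \emph{even} ample class $2NA$, so that $\g\bmod 2=\g'\bmod2$ is unaffected) produces at will a spin or a non-spin $\widehat M$. A non-spin choice gives $i(\widehat M)=\infty$ and $H_2(\widehat M)=H_2(M)\oplus\ZZ$, hence $\widehat M\cong M\# X_\infty$ by Theorem \ref{thm:sb-classification}; matching the spin type of $M$ gives $i(\widehat M)=i(M)$, hence $\widehat M\cong M\# M_\infty$ (the two coincide when $M$ is already non-spin).

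The main obstacle is meeting three competing constraints on the single line bundle $\widehat B$: Kählerness wants $c_1(\widehat B)$ of the form $(\text{large positive})-E$; primitivity of $c_1(\widehat M/m)$ is a gcd condition tying $\g$ to the arithmetic of the $m_i$; and the prescribed value of $w_2(\widehat M)$ is a parity condition on $\g$. The device that decouples them is to fix the $E$-coefficient at $1$ — which makes primitivity automatic and purely inherited from $X$ — and to supply the Kähler size through an even ample class, so that the residue $\g\bmod2$ governing $w_2$ can be set independently of how large a multiple is needed to clear the Seshadri bound. The remaining care points — invariance of the orbifold fundamental group and of the surjectivity in Theorem \ref{thm:Kollar}(2) under the blow-up — both hold precisely because the blow-up is performed away from the isotropy locus.
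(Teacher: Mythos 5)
Your strategy is essentially the paper's: blow up the Kähler orbifold base $X$ at a point off the isotropy locus, rebuild the Seifert bundle over $\hat X$ with first Chern class of the form $\rho^*(\text{ample})-E$, verify the hypotheses of Theorem \ref{thm:Kollar}, and identify the resulting $5$-manifold via Proposition \ref{prop:i} and Theorem \ref{thm:sb-classification}. Your parametrization differs in a minor way (you keep the orbit invariants $b_i$ and put the largeness into an even ample class $2NA$, toggling with $\g'$; the paper rescales the orbit invariants and the Kähler class by an integer $N$ with $\gcd(N,m)=1$ and toggles with the parity of $N$), and your checks of primitivity, of condition (2) of Theorem \ref{thm:Kollar}, of the Kähler/Seshadri bound, and of $\pi_1$ are all correct.

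However, there is a genuine gap in the $w_2$ step. With your data, formula (\ref{eqn:w2M}) gives $w_2(\hat M)=\hat\pi^*(w_2(\hat X)+\sum b_i[\hat D_i]+c_1(\hat B))$, and since $w_2(\hat X)=\rho^*w_2(X)+E$ while $c_1(\hat B)=\rho^*\g-E$, the two $E$-terms cancel mod $2$: $w_2(\hat M)=\hat\pi^*\rho^*(w_2(X)+\sum b_i[D_i]+\g)$ is \emph{always} a pullback of a class from $X$. Now suppose some $m_i$ are even, say $m_1,\ldots,m_c$ with $c>0$. By Proposition \ref{prop:w2}, $\ker\hat\pi^*=\la \rho^*[D_1],\ldots,\rho^*[D_c]\ra$. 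If these classes span $H^2(X,\ZZ_2)$, i.e.\ $c=b_2(X)$ (which forces $M$ to be spin, since then $\pi^*=0$ on $H^2(X,\ZZ_2)$ and $w_2(M)=\pi^*(\cdots)=0$), then $\hat\pi^*\rho^*$ is the zero map and $w_2(\hat M)=0$ for \emph{every} choice of $\g$: your toggle claim fails, only the spin manifold $M\# M_\infty$ is produced, and the non-spin $M\# X_\infty$ is out of reach. This case is not vacuous: by Theorem \ref{thm:sphere-semi}, the Seifert bundle over $X=\CP^2$ with isotropy a smooth quintic of multiplicity $2$ is a spin, simply connected, semi-regular Sasakian manifold with $H_2=\ZZ_2^{12}$, and there $c=1=b_2(X)$. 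The paper's proof contains a device precisely for the even case which your proposal lacks: it varies the isotropy locus itself, building one Seifert bundle with isotropy only along the $D_i$, and a second one in which the exceptional divisor $E$ is added as an isotropy surface with $m_E=2$, $b_E=1$. Adding $E$ does not change the homology (its genus is $0$) nor the conditions of Theorem \ref{thm:Kollar}, but it moves $[E]$ into $\ker\hat\pi^*$ and thereby switches the value of $w_2(\hat M)$. Without some analogue of this change of isotropy locus (your construction fixes the isotropy divisors once and for all, and fixes an odd $E$-coefficient in $c_1(\hat B)$, which is exactly what makes the $E$-contributions cancel), the theorem remains unproved for spin manifolds whose even-multiplicity isotropy divisors span $H^2(X,\ZZ_2)$.
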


\begin{proof}
 As before, consider a Seifert bundle $\pi: M\to X$ over a  Kähler orbifold $X$. Blow-up $X$ at a point outside
 the ramification divisors to get another orbifold $\hat X$. Clearly $H_2(\hat X)=H_2(X)\oplus \ZZ\la E\ra$,
 where $E$ is the exceptional divisor. Then $K_{\hat X}=K_X+E$, and $[\hat\omega]=[\omega]-\a[E]$ is
 a Kähler class for a small $\a>0$ (we have to multiply the class $[\hat\omega]$ by a large integer number to make it integral). 
 
 Let $\{(D_i,m_i,b_i)\}$ be the orbit invariants and isotropy locus of $\pi:M\to X$. We take some large integer $N>0$
 and $\a=\frac{1}{N}$. % an small (irreducible) rational number. 
 Consider for $\hat X$ the 
 orbit invariants  $\{(D_i,m_i,N b_i)\}$  and $[\hat\omega]=N([\omega]-\a[E])=N[\omega] - [E]$. Take the 
 semi-regular Sasakian structure $\hat\pi : \hat M\to \hat X$ with the first Chern class 
 $c_1(\hat M/\hat X)=[\hat\omega]$. Hence 
  \begin{equation}\label{eqn:mmm}
 c_1(\hat M/m)= N m\,  c_1(B) - m [E] + \sum Nb_i\frac{m}{m_i}[D_i]. 
   \end{equation}
This class is primitive since $c_1(M/m)$ is primitive, and taking $\gcd(m,N)=1$.

The orbifold fundamental
group satisfies $\pi_1^{\orb}(\hat X)=\pi_1^\orb(X)$. By Theorem \ref{thm:Kollar}, $H_1(\hat M,\ZZ)=0$.
Moreover $H_2(\hat M,\ZZ)=H_2(M,\ZZ) \oplus \ZZ$.

Finally, let us compute $w_2(\hat M)$. Let $K=\ker (\pi^*:H^2(X,\ZZ_2) \to H^2(M,\ZZ_2))$ and
$\hat K=\ker (\pi^*:H^2(\hat X,\ZZ_2) \to H^2(\hat M,\ZZ_2))$. If all $m_i$ are odd, then
$K=\la c_1(B)+ \sum b_i[D_i]\ra$ and $\hat K=\la Nc_1(B)+\sum Nb_i[D_i]-  [E]\ra$, 
$w_2(M)=\pi^*w_2(X)$ and $w_2(\hat M)=\hat\pi^*w_2(\hat X)$. Also $w_2(\hat X)=w_2(X)+[E]$.
From all of this we have: 
 \begin{itemize}
 \item if $w_2(M)=0$ and $N$ odd then $w_2(\hat M)=0$; 
 \item if $w_2(M) = 0$ and $N$ even then $w_2(\hat M)\neq 0$;
\item if $w_2(M)\neq 0$ and $N$ odd then $w_2(\hat M)\neq 0$.
\end{itemize}

If some $m_i$ are even, then take $m_1,\ldots, m_c$ even and $m_i$ odd for $i>c$. Then
$K=\la [D_1],\ldots, [D_c]\ra$. Then:
 \begin{itemize}
 \item if the isotropy locus of $\hat\pi:\hat M\to \hat X$ is given by the $D_i$ as chosen above, then 
 $\hat K=\la [D_1],\ldots, [D_c]\ra$. As $w_2(\hat X)=w_2(X)+[E]$, then $w_2(\hat M)=\pi^*w_2(\hat X)\neq 0$.
 \item take as isotropy locus $D_1,\ldots, D_r$, and also the divisor $E$ with orbit invariants $m_E=2$, $b_E=1$.
 Then the Chern class (\ref{eqn:mmm}) gets modified by adding $\frac{m}2 [E]$, but it is still a Kähler class and
 primitive. The condition (2) of Theorem \ref{thm:Kollar} still holds, and the homology does not change since
 the genus $g_E=0$. Now $\hat K=\la [D_1],\ldots, [D_c],[E]\ra$
 and hence $w_2(\hat M)=0$.
\end{itemize}
\end{proof}

\noindent \emph{Proof of Theorem \ref{thm:main1}.} 
First, we do the case $k=1$. If all $m_i$ are odd, then for $i(M)=0$, we use Theorem \ref{thm:sums1},
and for $i(M)=\infty$, we use Theorem \ref{thm:sums2}.

If some of $m_i$ is even, we arrange $m_1$ even and all other $m_i$, $i>1$, odd. Then if $g_1$ is even, we use Theorem \ref{thm:sums1}
for both $i(M)=0,\infty$. Alternatively, if $g_1$ is odd then we use Theorem \ref{thm:sums2} for both $i(M)=0,\infty$.

The case $k>1$ is reduced to the case $k=1$ by using Theorem \ref{thm:highrank}.
\hfill $\Box$

\begin{remark}
Theorem \ref{thm:main1} implies that for $k\geq 1$ and $\bt=1$, the manifolds admitting 
a semi-regular Sasakian structure are the same as the manifolds admitting a semi-regular K-contact structure. This is in contrast with the examples of Smale-Barden manifolds with semi-regular K-contact structures but with no Sasakian semi-regular structures constructed in \cite{CMRV} and \cite{MRT}. It seems to be important to understand this in terms of $\bt$.
\end{remark}

%%%%%%%%%%%%%%%%%%%%%%%%%%%%%%%%%%%%%%%%%%%%
 \section{The case of homology spheres}\label{sec:spheres}
%%%%%%%%%%%%%%%%%%%%%%%%%%%%%%%%%%%%%%%%%%%%

In this section we  consider homology spheres, which correspond to the case $k=0$. 
Let 
 $$
  \cT=\left\{ \frac12 r(r-1) | r\geq 2\right\}
  $$ 
be the set of triangular numbers.
The genus of a smooth curve of degree $d$ in $\CP^2$ is $g=\frac12 (d-1)(d-2) \in \cT$.

\subsection{Proof of Theorem \ref{thm:sphere-semi}}
\begin{proof}
Start with $X=\CP^2$.
 Consider smooth curves $D_i$ of genus $g_i$ intersecting transversally. Put coefficients $m_i$ and
 make the Seifert bundle. Use Proposition \ref{prop:abelian} to check that $\pi_1(M)$ is abelian. We still need
 to check the properties of Theorem \ref{thm:Kollar}. For (2), we need $\gcd(d_i,m_i)=1$. 
 Finally, for (3), choosing $c_1(B)=\b H$ we get $c_1(M/X)=\sum \frac{b_i}{m_i}[D_i]$, so 
  $$
   c_1(M/m) = (m\b + \sum b_i M_i d_i) [H]
 $$
where $M_i=\frac{m}{m_i}$, which have $\gcd(M_1,\ldots,M_r)=1$.
Finally, we need the property
  $$
  \gcd(m,M_1d_1,\ldots, M_rd_r)=1
  $$
  to get a primitive class $c_1(M/m)$.
To see this take a prime number $p$ that $p|m$. Then
there is some $p|m_i$, hence $p\not| M_i$ and $p\not| d_i$ since $\gcd(m_i,d_i)=1$. 
So $p\not| M_id_i$.

By Proposition \ref{prop:w2}, the map $\pi^*:H^2(X,\ZZ_2)\to H^2(M,\ZZ_2)$ has kernel, hence it must be the
zero map. Therefore $w_2(X)=0$ by (\ref{eqn:w2M}).

Conversely, let $\pi: M\rightarrow X$ be a semi-regular Sasakian manifold with $b_2(M)=1$,
satisfying the assumptions of Theorem \ref{thm:Kollar}. Therefore, the isotropy locus is a collection of
complex curves $D_i\subset X$ with isotropy $m_i$. Moreover, $X$ must be simply connected and $b_2(X)=1$. 
As fake projective planes are never simply-connected, we have
$X=\CP^2$. Each $D_i$ must realize a homology class $d_iH$, where 
$H$ is the hyperplane class in $H^2(X,\ZZ)=\ZZ\langle H\rangle$. As all $D_i$ intersect,
we have that $\gcd(m_1,\ldots, m_r)=1$. 
It is well known \cite{GS} that the genus of $D_i$ is ${1\over 2}(d_i-1)(d_i-2)$, that is a triangular number.
The condition (2) of Theorem \ref{thm:Kollar} says that $H^2(X,\ZZ)\to H^2(D_i,\ZZ_{m_i})$ is surjective,
which implies that $\gcd(d_i,m_i)=1$.
\end{proof}

Now we want to compare the Smale-Barden manifolds which are rational homology spheres
admitting a semi-regular Sasakian structure with those admitting a semi-regular K-contact structure.

\begin{proposition}\label{prop:last}
Let $M$ be a Smale-Barden manifold with 
$H_2(M,\ZZ)= \bigoplus_{i=1}^r \ZZ_{m_i}^{2g_i}$ and spin, admitting a semi-regular K-contact structure. Then 
$m_i\geq 2$ are pairwise coprime, $g_i=\frac12 (d_i-1)(d_i-2)\in \cT$, and
either $\gcd(m_i,d_i)=1$ for all $i$, or $\gcd(m_i,d_i+3)=1$ for all $i$. 
\end{proposition}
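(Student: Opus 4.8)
The plan is to run the converse argument of Theorem~\ref{thm:sphere-semi}, but with the base being only a \emph{symplectic} orbifold rather than a Kähler one; this weakening is exactly what produces the second alternative in the statement. By the symplectic-orbifold analogue of Theorem~\ref{thm:seifert} (see \cite{MRT}), a semi-regular K-contact structure on $M$ presents $M$ as the total space of a Seifert bundle $\pi\colon M\to X$ over a \emph{smooth} symplectic $4$-orbifold $X$ (smoothness is the content of semi-regularity), whose isotropy locus is a collection of symplectic surfaces $D_i$ with multiplicities $m_i$. Since $b_2(M)=0$, Theorem~\ref{thm:Kollar} gives $b_2(X)=k+1=1$, identifies the genus of $D_i$ with $g_i$, and forces the surjectivity of $H^2(X,\ZZ)\to\bigoplus H^2(D_i,\ZZ_{m_i})$. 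Moreover $M$ simply connected together with the exact sequence $\ZZ\to\pi_1(M)\to\pi_1^{\orb}(X)$ forces $\pi_1^{\orb}(X)=1$, so $X$ is a simply connected symplectic $4$-manifold.

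First I would pin down the homeomorphism type of $X$. A symplectic form has $[\omega]^2>0$, so $b_2^+(X)\ge 1$; combined with $b_2(X)=1$ this gives $b_2^+=1$, $b_2^-=0$, i.e.\ the intersection form is $\la 1\ra$. By Freedman's classification a simply connected closed $4$-manifold with this form is homeomorphic to $\CP^2$, so $H^2(X,\ZZ)=\ZZ\la H\ra$ with $H^2=1$, and each symplectic surface satisfies $[D_i]=\delta_i H$ with $\delta_i>0$ by positivity of the symplectic area. As in Theorem~\ref{thm:sphere-semi}, any two intersecting $D_i$ force the corresponding $m_i$ to be coprime; since all classes $\delta_i H$ are positive multiples of the ample generator we have $D_i\cdot D_j=\delta_i\delta_j>0$, so every pair intersects and the $m_i$ are pairwise coprime. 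The surjectivity of Theorem~\ref{thm:Kollar}(2) then translates, exactly as in the Sasakian case, into $\gcd(\delta_i,m_i)=1$.

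The crux is to compute the genus without a complex structure. The canonical class $K_X=\kappa H$ is characteristic, so $\kappa$ is odd, and for an almost complex $4$-manifold $K_X^2=c_1^2=2\chi(X)+3\sigma(X)=2\cdot 3+3\cdot 1=9$; hence $\kappa^2=9$ and $\kappa=\pm 3$. This is the essential difference from the Kähler situation: a genuine complex structure on $\CP^2$ forces $K_X=-3H$, but the purely symplectic data leaves both signs open. Since the $D_i$ are symplectic submanifolds, the adjunction formula holds \emph{with equality}, $2g_i-2=K_X\cdot[D_i]+[D_i]^2=\kappa\,\delta_i+\delta_i^2$. For $\kappa=-3$ this reads $g_i=\tfrac12(\delta_i-1)(\delta_i-2)$, so setting $d_i=\delta_i$ we get $\gcd(m_i,d_i)=1$ for all $i$. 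For $\kappa=+3$ it reads $g_i=\tfrac12(\delta_i+1)(\delta_i+2)$, which written in the form $\tfrac12(d_i-1)(d_i-2)$ amounts to a shift of the degree by $3$, turning $\gcd(\delta_i,m_i)=1$ into the second alternative $\gcd(m_i,d_i+3)=1$. Because there is a single $K_X$, the sign of $\kappa$ is uniform in $i$, which is why the conclusion is a dichotomy valid for \emph{all} $i$ rather than a case-by-case choice.

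The main obstacle is precisely the appearance of both signs $\kappa=\pm 3$: in the K-contact setting there is no holomorphic structure to select $K_X=-3H$, so I must genuinely allow $K_X=+3H$ and keep the adjunction identity exact for symplectic surfaces (invoking the symplectic Thom/adjunction, e.g.\ via Taubes). Verifying that this second, non-holomorphic alternative cannot be discarded is exactly what makes the proposition a meaningful refinement of the Sasakian converse in Theorem~\ref{thm:sphere-semi}.
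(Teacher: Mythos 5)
Your proposal runs along essentially the same lines as the paper's proof: pass to the Seifert fibration over a simply connected symplectic $4$-manifold $X$ with $b_2(X)=1$ and positive definite intersection form, use the almost-complex relation $c_1^2=2\chi+3\sigma=9$ (the paper quotes it as $K_X^2+\chi(X)=12$) to conclude $K_X=\pm 3H$, and apply adjunction to the symplectic isotropy surfaces together with condition (2) of Theorem \ref{thm:Kollar} to get the genus formulas and the coprimality; the dichotomy of signs is exactly the paper's source of the second alternative. Two of your appeals are heavier than necessary: Freedman's theorem is not needed, since only the cohomology ring $H^2(X,\ZZ)=\ZZ\la H\ra$ with $H^2=1$ enters, and that follows from unimodularity plus $[\omega]^2>0$; and adjunction with equality for symplectic surfaces is elementary (choose a compatible almost complex structure preserving $TD_i$ and split $TX|_{D_i}$ as complex bundles), so no Taubes or symplectic Thom input is required.

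One point to fix: in the case $K_X=+3H$ your own computation gives $\gcd(m_i,\delta_i)=1$, where $\delta_i$ is the symplectic degree and $d_i=\delta_i+3$ is the genus parameter, and this reads $\gcd(m_i,d_i-3)=1$, not $\gcd(m_i,d_i+3)=1$ --- the shift goes the other way. Note that the paper's proof itself stops at coprimality of $m_i$ with the degree (its ``$\gcd(m_i,d_i)=1$ as before'' uses $d_i$ for the degree and $k=d_i+3$ for the genus parameter), so your slip reproduces a discrepancy already present between the paper's statement and its proof; the substance of your argument, namely the dichotomy $K_X=\pm 3H$ and the coprimality of each $m_i$ with the corresponding symplectic degree, is correct and is what the paper actually establishes.
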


\begin{proof}
The proof is analogous to the last part of the proof of Theorem \ref{thm:sphere-semi}. The difference is that
now $X$ is a symplectic $4$-manifold with $b_2=1$. As $\pi_1(M)=1$ then $\pi_1^{\orb}(X)=1$, which
implies that $\pi_1(X)=1$. The intersection form must be positive definite, hence $H_2(X,\ZZ)=\ZZ\la H\ra$,
for some integral class $H$ with $H^2=1$. The canonical class of the almost-complex structure $K_X$ satisfies
\cite[Theorem 1.4.15]{GS}
 $$
  K_X^2+\chi(X) =12.
  $$
Then $K_X^2=9$, and hence $K_X=\pm 3H$.
In the first case $D_i^2+K_X\cdot D_i=d_i^2-3d_i=2g-2$, hence $g=(d_i-1)(d_i-2)/2$. In the second case,
$D_i^2+K_X\cdot D_i=d_i^2+3d_i=2g-2$, hence $g=(d_i+1)(d_i+2)/2=(k-1)(k-2)/2$, with $k=d_i+3$. Finally it must
be $\gcd(m_i,d_i)=1$ as before.
\end{proof}

\begin{remark}
 The only symplectic $4$-manifold which is simply-connected, has $b_2=1$ and $K_X\cdot[\omega]<0$ is $\CP^2$
 by a result of Taubes \cite{Tau}. It is not known whether there are simply-connected symplectic $4$-manifolds with
 $b_2=1$ and $K_X\cdot [\omega]>0$, although it is expected that they do not exist. In such case, the possibility
 $\gcd(m_i,d_i+3)=1$ would not appear in Proposition \ref{prop:last}, and then the classification of semi-regular K-contact
 and semi-regular Sasakian Smale-Barden rational homology spheres would be the same.
\end{remark}

%%%%%%%%%%%%%%%%%%%%%%%%%%%%%%%%%%%%%%%%%%%%
\section{Definite Sasakian structures and proof of Theorem \ref{thm:main2}}
%%%%%%%%%%%%%%%%%%%%%%%%%%%%%%%%%%%%%%%%%%%%

Recall that the Reeb vector field $\xi$ on a co-oriented contact manifold $(M,\eta)$ determines a 1-dimensional foliation 
$\mathcal{F}_{\xi}$ called the {\it characteristic foliation}. If we are given a Sasakian manifold $(M,\eta,\xi,g,J)$, then 
one can define  {\it basic Chern classes} $c_k(\mathcal{F}_{\xi})$ of $\mathcal{F}_{\xi}$ which are elements of the 
basic cohomology $H^{2k}_B(\mathcal{F}_{\xi})$ (see \cite[Theorem/Definition 7.5.17]{BG}).     
We say that a Sasakian structure is positive (negative) if $c_1(\mathcal{F}_{\xi})$ can be represented by a positive 
(negative) definite $1$-form. A Sasakian structure is called null, if $c_1(\mathcal{F}_{\xi})=0$. If none of these, it is called indefinite. 

\medskip

\subsection{  Proof of Theorem \ref{thm:main2}}
The proof is a corollary to Theorem \ref{thm:sphere-semi}. To see this, one recalls the results below and makes a comparison.

\begin{theorem}[{\cite[Theorem 10.2.17]{BG}}]\label{thm:pos-sas} 
Suppose that $M$ admits a positive Sasakian structure. Then the torsion subgroup of $H_2(M,\ZZ)$ is one of the following:
 $$
 (\ZZ_m)^2, m>0, (\ZZ_5)^4, (\ZZ_4)^4,
 (\ZZ_3)^4,(\ZZ_3)^6, (\ZZ_3)^8, (\ZZ_2)^{2n},\,n>0.
 $$
\end{theorem}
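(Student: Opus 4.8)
The plan is to reduce to the quasi-regular case, translate positivity into a Fano condition on the base orbifold, and then bound the branch data by orbifold adjunction together with the boundedness of log del Pezzo surfaces.

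First I would replace the given positive Sasakian structure by a quasi-regular one. By \cite{R} (and the deformation theory of \cite[Ch.~7, 11]{BG}) every Sasakian manifold carries a quasi-regular Sasakian structure, and since positivity of the basic first Chern class $c_1(\mathcal{F}_\xi)$ is an open condition on the Reeb ray it is preserved under the deformation used to achieve quasi-regularity. By Theorem \ref{thm:seifert} this yields a Seifert bundle $\pi\colon M\to X$ over a compact Kähler orbifold $X$ of complex dimension two, whose isotropy locus is a collection of curves $D_i$ of multiplicities $m_i$. Positivity of $c_1(\mathcal{F}_\xi)$ translates into ampleness of the orbifold anticanonical class $-c_1^{\orb}(X)=-(K_X+\Delta)$, where $\Delta=\sum_i(1-\tfrac1{m_i})D_i$; that is, $(X,\Delta)$ is a log del Pezzo (Fano) orbifold surface. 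By Theorem \ref{thm:Kollar} (and its quasi-regular refinement in \cite{K}) the torsion subgroup of $H_2(M,\ZZ)$ is $\bigoplus_i \ZZ_{m_i}^{2g_i}$ with $g_i$ the genus of $D_i$, so only curves with $g_i\ge 1$ contribute and it suffices to control the pairs $(m_i,g_i)$ with $g_i\ge 1$.

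Next I would extract the governing numerical constraint. Fix a branch curve $D=D_i$ of multiplicity $m=m_i$ and genus $g\ge 1$. Ampleness gives $(K_X+\Delta)\cdot D<0$, while the coprimality hypothesis $\gcd(m_i,m_j)=1$ for intersecting curves makes every cross term $(1-\tfrac1{m_j})D_j\cdot D$ nonnegative. Combining this with adjunction $2g-2=(K_X+D)\cdot D$ gives
$$ 0>(K_X+\Delta)\cdot D\ \ge\ (2g-2)-\tfrac1m D^2, \qquad\text{i.e.}\qquad D^2>m(2g-2). $$
Thus a positive-genus branch curve must have large positive self-intersection, which is in turn bounded by the anticanonical volume of $(X,\Delta)$. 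The same volume budget bounds the total boundary: already on $\CP^2$ two genus-one cubics $D_1,D_2=3H$ would require $-(K+\tfrac{m_1-1}{m_1}D_1+\tfrac{m_2-1}{m_2}D_2)=3(\tfrac1{m_1}+\tfrac1{m_2}-1)H$ to be ample, impossible for coprime $m_1,m_2\ge 2$; the same estimate in general shows that at most one branch curve can have genus $\ge 1$. Hence the torsion subgroup reduces to a single summand $\ZZ_m^{2g}$, which already explains the shape of the list.

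Finally I would enumerate the admissible single pairs $(m,g)$ via the classification of log del Pezzo surfaces. For $m=2$ the boundary coefficient $\tfrac12$ is the log-terminal threshold and $D^2>2(2g-2)$ leaves enough anticanonical room for arbitrarily large $g$, producing the unbounded family $(\ZZ_2)^{2n}$; genus $g=1$ with any multiplicity is realized by an elliptic branch curve and gives $(\ZZ_m)^2$. For $m\ge 3$ the inequality together with the boundedness of $\epsilon$-log terminal del Pezzo surfaces (Kollár, cf.\ \cite{K}) leaves only finitely many cases, and running through the relevant surfaces (weighted projective planes and their blow-ups, $\CP^1\times\CP^1$, and so on) pins down exactly $(\ZZ_3)^4,(\ZZ_3)^6,(\ZZ_3)^8,(\ZZ_4)^4,(\ZZ_5)^4$. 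The hard part is precisely this last step: the adjunction inequality alone only bounds $D^2$, and converting it into the sharp finite list of exceptional pairs requires the full classification of orbifold del Pezzo surfaces carrying a single high-multiplicity branch curve, together with a careful check that no further surface contributes a new group. This is where the boundedness theory for log del Pezzo surfaces is indispensable.
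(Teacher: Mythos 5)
There is an important mismatch of expectations here: the paper contains no proof of this statement at all. It is quoted verbatim from \cite[Theorem 10.2.17]{BG} (the result is due to Koll\'ar), and the paper only uses it, together with Theorem \ref{thm:pos-sphere}, to deduce Theorem \ref{thm:main2} from Theorem \ref{thm:sphere-semi}. So your proposal can only be judged against the actual proof in the literature --- and there your outline is essentially the correct one: pass to a quasi-regular structure preserving positivity \cite{R}, view $M$ as a Seifert bundle over a log del Pezzo orbifold $(X,\Delta)$ with $\Delta=\sum(1-\tfrac1{m_i})D_i$ via Theorem \ref{thm:seifert}, observe that torsion in $H_2(M,\ZZ)$ comes only from branch curves of positive genus, and exploit the adjunction inequality $D^2>m(2g-2)$ forced by ampleness of $-(K_X+\Delta)$. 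That is the genuine architecture of Koll\'ar's argument.

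However, as a proof your proposal has concrete gaps at exactly the decisive steps. First, a quasi-regular base orbifold is in general \emph{not} semi-regular: $X$ may have isolated cyclic quotient singularities through which the $D_i$ pass, so Theorem \ref{thm:Kollar} (stated in the paper only for semi-regular bundles) does not directly give the torsion of $H_2(M,\ZZ)$, smooth-surface adjunction $2g-2=(K_X+D)\cdot D$ acquires correction terms, and $D^2$ is only rational; one needs the orbifold versions from \cite{K}. Second, your claim that at most one branch curve has genus $\geq 1$ is only checked for two cubics in $\CP^2$; in general the two curves may be disjoint, so the cross-term estimate you rely on vanishes and gives nothing, and the genuine argument requires using ampleness of $-(K_X+\Delta)$ globally (Hodge index / structure of log del Pezzo surfaces), which you never carry out. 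Third, and most seriously, the theorem \emph{is} the precise list: the exceptional cases $(\ZZ_3)^4,(\ZZ_3)^6,(\ZZ_3)^8,(\ZZ_4)^4,(\ZZ_5)^4$ and the exact shape of the unbounded families are the entire content, and your proposal explicitly delegates this enumeration to ``the classification of log del Pezzo surfaces'' without performing it. What you have proved is a qualitative boundedness statement, not the theorem; the remarks about realizability (``is realized by an elliptic branch curve'') also conflate sufficiency with the necessity that the statement actually asserts.
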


\begin{theorem}[{\cite[Theorem 10.3.14]{BG}}]\label{thm:pos-sphere} 
Let $M$ be a rational homology sphere. If it admits a Sasakian structure, then it is either positive, 
and the torsion in $H_2(M,\ZZ)$ is restricted by Theorem \ref{thm:pos-sas} or it is negative. 
\end{theorem}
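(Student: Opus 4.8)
The plan is to read off the type of the Sasakian structure from the transverse Kähler geometry of the given data $(M,\eta,\xi,g,J)$, whose transverse geometry is Kähler of complex dimension $2$, and to show that a rational homology sphere forces the basic second cohomology to be one-dimensional, spanned by the transverse Kähler class $[d\eta]$. Recall from \cite{BG} that the basic cohomology $H^*_B(\mathcal{F}_\xi)$ carries a transverse Hodge decomposition, that $c_1(\mathcal{F}_\xi)\in H^{1,1}_B$, and that there is a Gysin-type long exact sequence
\[
 \cdots \to H^{p-2}_B \xrightarrow{\,\cup[d\eta]\,} H^p_B \xrightarrow{\,j\,} H^p(M,\RR) \xrightarrow{\,\delta\,} H^{p-1}_B \xrightarrow{\,\cup[d\eta]\,} H^{p+1}_B \to \cdots,
\]
in which $[d\eta]$ is the positive, type $(1,1)$, transverse Kähler class. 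It is convenient to assume by \cite{R} that the structure is quasi-regular, so that $\pi\colon M\to X$ is a Seifert bundle over a Kähler orbifold $X$ (Theorem \ref{thm:seifert}), $H^*_B(\mathcal{F}_\xi)=H^*_{\orb}(X)$, and $c_1(\mathcal{F}_\xi)=c_1^{\orb}(X)=-K_X^{\orb}$; since $M$ is simply connected, the exact sequence $\ZZ\to\pi_1(M)\to\pi_1^{\orb}(X)$ gives $\pi_1^{\orb}(X)=1$.

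Next I would extract the consequences of $b_1(M)=b_2(M)=0$ directly from the Gysin sequence, using only that $\cup[d\eta]\colon H^0_B\to H^2_B$ is injective (it sends $1$ to $[d\eta]\neq0$). From $H^1(M,\RR)=0$ the connecting map $\delta$ in degree $1$ lands in $\ker(\cup[d\eta])=0$, so it vanishes; then $H^1_B$ injects into $H^1(M,\RR)=0$, giving $H^1_B=0$ and hence $H^{0,1}_B=0$. Feeding $H^1_B=0$ and $H^2(M,\RR)=0$ back in, the map $j\colon H^2_B\to H^2(M,\RR)$ is zero and its kernel equals the image of $\cup[d\eta]$, so $H^2_B=\RR\,[d\eta]$ is one-dimensional. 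By the transverse Hodge decomposition \cite{BG}, and since $[d\eta]$ has type $(1,1)$, this yields $H^{2,0}_B=H^{0,2}_B=0$ and $H^{1,1}_B=\RR\,[d\eta]$. Consequently $c_1(\mathcal{F}_\xi)$ is a real multiple $\lambda\,[d\eta]$ of a positive class, which excludes the indefinite case outright: the structure is positive, negative, or null according as $\lambda>0$, $\lambda<0$, or $\lambda=0$.

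It then remains to rule out the null case and to treat the positive case. If $\lambda=0$, the transverse canonical bundle $\kappa=\Omega^{2,0}_B$ has vanishing real first Chern class; by the transverse Calabi--Yau theorem \cite{BG} one may take the transverse metric Ricci-flat, so $\kappa$ becomes a flat Hermitian line bundle, and its $U(1)$-monodromy factors through $\pi_1^{\orb}(X)$. Since $\pi_1^{\orb}(X)=1$, the bundle $\kappa$ is holomorphically trivial and thus carries a nowhere-vanishing basic holomorphic section, i.e.\ $H^{2,0}_B\neq0$, contradicting the previous paragraph. Hence the null type cannot occur, and the structure is positive or negative. Finally, in the positive case the restriction on the torsion of $H_2(M,\ZZ)$ is exactly the content of Theorem \ref{thm:pos-sas}. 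I expect the null exclusion to be the main obstacle: one must upgrade the numerical vanishing $c_1(\mathcal{F}_\xi)=0$ to a genuine transverse holomorphic volume form, and it is precisely the simple-connectedness of $M$ (killing the Enriques-type monodromy of the flat transverse canonical bundle, so that no transverse analogue of an Enriques surface arises) that supplies the contradiction.
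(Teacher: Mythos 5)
Your argument is sound, but note first that the paper contains no proof of this statement at all: it is quoted verbatim from \cite[Theorem 10.3.14]{BG}, and the paper's own Remark after it points to \cite[Proposition 7.5.29]{BG} for the assertion that rational homology spheres carry only definite Sasakian structures. What you have written is essentially a reconstruction of that source's argument: the Gysin-type sequence for the characteristic foliation, together with $b_1(M)=b_2(M)=0$ and the injectivity of $\cup[d\eta]$ on $H^0_B$, forces $H^2_B(\mathcal{F}_\xi)=\RR\,[d\eta]$, hence $c_1(\mathcal{F}_\xi)=\lambda[d\eta]$, killing the indefinite case; and the transverse Calabi--Yau theorem plus triviality of the monodromy of the flat transverse canonical bundle kills the null case by producing a class in $H^{2,0}_B$, contradicting the Hodge decomposition of $H^2_B=\RR\,[d\eta]$. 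Two remarks. First, your null-exclusion uses $\pi_1(M)=1$ (via $\pi_1^{\orb}(X)=1$), which is not literally contained in the hypothesis ``rational homology sphere''; in the paper's context $M$ is a Smale--Barden manifold, so this is available, and you are right that it is genuinely needed --- with only rational vanishing one merely gets that the transverse canonical class is torsion, so the monodromy is finite and an Enriques-type (log Enriques) transverse geometry is not excluded without the $\pi_1$ input. You correctly identified this as the crux. Second, a small point of hygiene: the reduction to a quasi-regular structure via \cite{R} is stated as a convenience, but your Gysin and Hodge-theoretic steps apply verbatim to the given (possibly irregular) structure, and in the null case you can bypass $\pi_1^{\orb}(X)$ entirely by observing that a flat foliated line bundle over the simply connected $M$ has trivial holonomy; phrased that way you avoid having to check that the type is unchanged under the deformation to a quasi-regular structure. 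With those caveats, the proof is correct and matches the approach of the cited source.
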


\begin{remark} 
The proof that rational homology spheres can carry only definite Sasakian structures actually is given in \cite[Proposition 7.5.29]{BG}. 
\end{remark}

%%%%%%%%%%%%%%%%%%%%%%%%%%%%%%%%%%%%%%%%%
\section{Regular Sasakian structures } \label{sec:bt=0}
%%%%%%%%%%%%%%%%%%%%%%%%%%%%%%%%%%%%%%%%%

The description of regular Sasakian structures is obtained in \cite[Proposition 10.4.4]{BG},  by considering 
various types of Sasakian structures on torsion-free Smale-Barden manifolds classified by 
Theorem \ref{thm:sb-classification}. For completeness we show that our methods allow us 
to describe regular Sasakian structures in a unified way (see Theorem \ref{thm:torsionfree}).
Note that if a base $X$ is a smooth manifold ($X$ is considered with an empty singular locus), then  
Theorem \ref{thm:seifert} yields a circle bundle $M\rightarrow X$ over a Kähler manifold $X$. 
This bundle is determined by the cohomology class $[\omega]\in H^2(X,\ZZ)$ as the first Chern 
class of the circle bundle. Thus, regular Sasakian structures are in one to one correspondence with such bundles.   
Let $M\rightarrow X$ be a circle bundle with Euler class $e$. Assume that $X$ is a simply 
connected closed $n$-manifold.  The class $e$ is primitive  if the map $\langle e,-\rangle: H_2(X,\ZZ)\rightarrow \ZZ$ 
is surjective, equivalently if the map %. It is straightforward to see that $e$ is indivisible if and only if the map 
$e\cup: H^{n-2}(X,\ZZ)\rightarrow H^n(X,\ZZ)\cong\ZZ$ is onto.

\begin{theorem}[{\cite[Theorem 9.12]{H}}]\label{thm:bundle-classif} 
Let $X$ be a simply connected oriented $4$-manifold and let $M\rightarrow X$ be 
a circle bundle over $X$ with primitive Euler class $e$. Then $M$ is diffeomorphic to 
\begin{itemize}
\item $M\cong\#(b_2(X)-1)S^2\times S^3$, if $M$ is spin;
\item $M\cong \#(b_2(X)-2)S^2\times S^3\#S^2\tilde{\times}S^3$, if $M$ is not spin.
\end{itemize}
\end{theorem}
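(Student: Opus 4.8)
The plan is to recognize that the total space $M$ of such a circle bundle is a closed simply connected $5$-manifold, hence a Smale--Barden manifold, and then to read off its diffeomorphism type from Theorem \ref{thm:sb-classification} by computing $H_2(M,\ZZ)$ together with its spin type. The main computational tool will be the Gysin sequence of the oriented circle bundle $\pi:M\to X$,
\[
\cdots \to H^{k-2}(X,\ZZ)\xrightarrow{\cup e} H^{k}(X,\ZZ)\xrightarrow{\pi^*} H^{k}(M,\ZZ)\to H^{k-1}(X,\ZZ)\xrightarrow{\cup e} H^{k+1}(X,\ZZ)\to\cdots,
\]
fed the cohomology of a simply connected $4$-manifold, namely $H^0=H^4=\ZZ$, $H^1=H^3=0$, and $H^2=\ZZ^{b_2(X)}$.

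First I would establish that $M$ is simply connected, and here the hypothesis that $e$ is primitive enters decisively. From the homotopy exact sequence of the fibration $S^1\hookrightarrow M\to X$ one gets $\pi_2(X)\xrightarrow{\partial}\pi_1(S^1)=\ZZ\to\pi_1(M)\to\pi_1(X)=0$. Under the Hurewicz isomorphism $\pi_2(X)\cong H_2(X,\ZZ)$ the connecting map $\partial$ is the pairing $\alpha\mapsto\langle e,\alpha\rangle$ against the Euler class, which is onto precisely because $e$ is primitive; hence $\pi_1(M)=\coker\partial=0$.

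Next I would compute $H_2(M,\ZZ)$. The degree-$2$ piece of the Gysin sequence reads $H^0(X)\xrightarrow{\cup e}H^2(X)\xrightarrow{\pi^*}H^2(M)\to H^1(X)=0$, and $\cup e$ carries a generator to the primitive class $e$, so it is injective with free cokernel; thus $H^2(M,\ZZ)=\ZZ^{b_2(X)}/\langle e\rangle\cong\ZZ^{b_2(X)-1}$ is free. By universal coefficients and Poincar\'e duality $H_2(M,\ZZ)\cong\ZZ^{b_2(X)-1}$ is torsion-free, so $M$ lies in the torsion-free stratum of Theorem \ref{thm:sb-classification}. There the only remaining invariant is the spin type: a torsion-free Smale--Barden manifold with $b_2=k$ is a connected sum of $k$ copies of $M_\infty=S^2\times S^3$ when spin, and is $X_\infty\#(k-1)M_\infty$ with $X_\infty=S^2\tilde\times S^3$ when non-spin. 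Substituting $k=b_2(X)-1$ and distinguishing according to whether $w_2(M)=0$ yields the two cases in the statement.

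The hard part is not any single step but making the primitivity hypothesis do its two jobs cleanly: it must simultaneously force $\coker\partial=0$ (simple connectivity) and guarantee that $\coker(\cup e)$ is free rather than merely finitely generated (absence of torsion). Both follow from $e$ being non-divisible, i.e.\ from surjectivity of $\langle e,-\rangle:H_2(X,\ZZ)\to\ZZ$; conversely, a divisible $e$ would produce a nontrivial cyclic fundamental group and torsion in $H_2(M)$, placing $M$ outside the torsion-free list. A secondary point worth care is the translation between the smooth spin condition on $M$ and the Barden invariant $i(M)\in\{0,\infty\}$ that indexes the classification, which is exactly what separates the $S^2\times S^3$ summands from the single twisted summand $S^2\tilde\times S^3$.
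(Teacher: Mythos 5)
The paper itself contains no proof of this theorem: it is imported verbatim from Hamilton's thesis \cite{H}, so there is no internal argument to compare against. Your strategy --- prove $\pi_1(M)=0$ from the homotopy sequence of the bundle (identifying $\partial\colon\pi_2(X)\to\pi_1(S^1)$ with $\langle e,-\rangle$, surjective exactly when $e$ is primitive), compute the second homology from the Gysin sequence, and then read off the diffeomorphism type from Theorem \ref{thm:sb-classification} using that a torsion-free Smale--Barden manifold is $\# k(S^2\times S^3)$ or $S^2\tilde{\times}S^3\,\#\,(k-1)(S^2\times S^3)$ according to spin type --- is the standard route, and your first and last steps are correct.

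The middle step, however, has a genuine gap. From $H^0(X)\xrightarrow{\cup e}H^2(X)\xrightarrow{\pi^*}H^2(M)\to H^1(X)=0$ you correctly conclude that $H^2(M,\ZZ)\cong \ZZ^{b_2(X)}/\langle e\rangle$ is free, but the inference ``by universal coefficients and Poincar\'e duality $H_2(M,\ZZ)$ is torsion-free'' does not follow. Since $H_1(M)=0$, universal coefficients gives $H^2(M)\cong\Hom(H_2(M),\ZZ)$, which is free \emph{whatever} torsion $H_2(M)$ carries; and Poincar\'e duality plus universal coefficients only yield the tautology $H_2(M)\cong H^3(M)\cong \ZZ^{b_2(X)-1}\oplus T(H_2(M))$, consistent with arbitrary torsion. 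In a $5$-manifold, torsion in $H_2$ is detected by $H^3$, not by $H^2$. The repair stays inside your own toolbox: either run the Gysin sequence one degree higher, $0=H^3(X)\to H^3(M)\to H^2(X)$, so $H^3(M)$ embeds in $\ZZ^{b_2(X)}$ and is free, whence $T(H_2(M))\cong T(H^3(M))=0$; or use the homological Gysin sequence $0\to H_2(M)\xrightarrow{\pi_*} H_2(X)\xrightarrow{\langle e,-\rangle} H_0(X)$, which exhibits $H_2(M)$ directly as $\ker\langle e,-\rangle\cong\ZZ^{b_2(X)-1}$, manifestly free. Your closing remark that a divisible $e$ would produce ``torsion in $H_2(M)$'' is symptomatic of the same conflation and is in fact false: if $e=k\,e'$ then $\pi_1(M)\cong H_1(M)\cong\ZZ_k$ and the torsion appears in $H^2(M)$ through $\mathrm{Ext}(H_1(M),\ZZ)$, while $H_2(M)$ itself remains torsion-free.
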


The condition on $M$ to be spin is described as follows.

\begin{theorem}[{\cite[Lemma 9.10]{H}}]\label{thm:spin} 
The total space $M$ of the circle bundle determined by the primitive Euler class $e$ is spin if and only if $w_2(X)\equiv 0,e \pmod2$. 
\end{theorem}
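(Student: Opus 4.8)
The plan is to reduce the spin condition on $M$ to a single computation of $\ker\pi^*$ on mod $2$ cohomology. The first step is to express $w_2(M)$ intrinsically. Writing $\pi:M\to X$ for the circle bundle, the tangent bundle splits as $TM\cong\pi^*TX\oplus\nu$, where $\nu$ is the vertical (fiberwise) tangent bundle. Since the fibers are circles and the bundle is oriented, $\nu$ is trivialized by the fundamental vector field of the $S^1$-action (equivalently, on the unit circle bundle of a complex line bundle $L$, by the section $v\mapsto iv$). Hence $w(TM)=\pi^*w(TX)$, and in particular
$$w_2(M)=\pi^*w_2(X)\in H^2(M,\ZZ_2).$$
Consequently $M$ is spin if and only if $w_2(X)$ lies in $\ker\big(\pi^*:H^2(X,\ZZ_2)\to H^2(M,\ZZ_2)\big)$.

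The second and main step is to identify this kernel. For this I would invoke the Gysin sequence of the circle bundle with $\ZZ_2$-coefficients, whose relevant segment is
$$H^0(X,\ZZ_2)\xrightarrow{\,\cup\,\bar e\,}H^2(X,\ZZ_2)\xrightarrow{\,\pi^*\,}H^2(M,\ZZ_2),$$
where $\bar e$ is the mod $2$ reduction of the Euler class $e$. Exactness gives $\ker\pi^*=\im(\cup\,\bar e)$, and since $X$ is connected we have $H^0(X,\ZZ_2)=\ZZ_2=\la 1\ra$; therefore this image is exactly the subgroup $\{0,\bar e\}\subset H^2(X,\ZZ_2)$. This is consistent with the ``all $m_i$ odd'' case of Proposition \ref{prop:w2} applied with an empty isotropy locus: there $\ker\pi^*$ is spanned by $c_1(B)+\sum b_i[D_i]$, which for a genuine circle bundle degenerates to $c_1(B)=e$.

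Combining the two steps, $M$ is spin precisely when $w_2(X)\in\{0,\bar e\}$, that is $w_2(X)\equiv0$ or $e\pmod2$, which is the assertion. The primitivity of $e$ and the hypotheses that $X$ be simply connected and four-dimensional are not needed for this particular argument; they are inherited from the surrounding setting. I expect the only delicate points to be the identification of the Gysin connecting map with cup product by $e$—so that $\ker\pi^*$ is genuinely the two-element group $\{0,\bar e\}$ rather than something larger—and the triviality of the vertical bundle $\nu$; both are standard, but this is precisely where the oriented circle bundle structure enters.
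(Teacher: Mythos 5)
Your proof is correct. Note that the paper does not actually prove this statement: it is imported verbatim from Hamilton's thesis (\cite[Lemma 9.10]{H}), so there is no in-paper argument to compare against. Both of your steps are sound: the vertical line bundle of an oriented circle bundle is trivialized by the fundamental vector field of the free $S^1$-action, so $w(TM)=\pi^*w(TX)$ by the Whitney formula and hence $w_2(M)=\pi^*w_2(X)$; and the mod $2$ Gysin sequence identifies $\ker\bigl(\pi^*:H^2(X,\ZZ_2)\to H^2(M,\ZZ_2)\bigr)$ with the image of $\cup\,\bar e$ on $H^0(X,\ZZ_2)=\ZZ_2$, i.e.\ with $\{0,\bar e\}$. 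This is the standard proof (and almost certainly Hamilton's own); it is also exactly the specialization to empty isotropy locus of the paper's Proposition \ref{prop:w2}, whose Leray spectral sequence for a Seifert fibration degenerates to the Gysin sequence when there is no branching --- your consistency check with the ``all $m_i$ odd'' case makes this explicit. You are also right that primitivity of $e$, simple connectivity of $X$, and $\dim X=4$ play no role in this particular argument: even if $\bar e=0$, so that $\ker\pi^*$ is trivial, the stated equivalence still holds, since then ``$w_2(X)\equiv e$'' coincides with ``$w_2(X)\equiv 0$''.
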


\begin{lemma} \label{lem:1}
For a smooth projective algebraic surface $S$, the Nakai–Moishezon criterion states that 
a divisor $D$ is ample if and only if its self-intersection number $D\cdot D>0$, 
and for any irreducible curve $C$ on S we have $D\cdot C > 0$.
\end{lemma}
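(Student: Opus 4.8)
\emph{Proof plan.} The plan is to prove both implications of the Nakai--Moishezon criterion. The easy direction I would dispatch first: if $D$ is ample, then some multiple $nD$ is very ample and embeds $S$ as a surface in a projective space, so that $D^2 = \tfrac1{n^2}(nD)^2 > 0$ is, up to scale, the degree of the image, and $D \cdot C = \tfrac1n \deg(\cO(nD)|_C) > 0$ for every irreducible curve $C$.

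For the converse I would assume $D^2 > 0$ and $D \cdot C > 0$ for all irreducible curves $C$, and first reduce to the case that $D$ is effective. Fixing any ample $H$ on the projective surface $S$, I would note that some multiple $mH$ is linearly equivalent to an effective sum $\sum a_i C_i$ of irreducible curves with $a_i > 0$, so that $m(D\cdot H) = \sum a_i (D\cdot C_i) > 0$ and hence $D\cdot H > 0$. Riemann--Roch then gives
$$\chi(\cO_S(nD)) = \chi(\cO_S) + \tfrac12\, nD\cdot(nD - K_S) = \chi(\cO_S) + \tfrac12 n^2 D^2 - \tfrac12 n\, D\cdot K_S,$$
which tends to $+\infty$ since $D^2 > 0$; moreover Serre duality $h^2(nD) = h^0(K_S - nD)$ vanishes for $n \gg 0$, because $(K_S - nD)\cdot H < 0$ forbids $K_S - nD$ from being effective. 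Hence $h^0(nD) \geq \chi(\cO_S(nD)) \to +\infty$, so $nD$ is effective for large $n$; as ampleness of $D$ is equivalent to ampleness of $nD$, I may replace $D$ by this multiple and assume $D$ is effective.

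Next I would run the cohomological bootstrap. Tensoring $0 \to \cO_S(-D) \to \cO_S \to \cO_D \to 0$ by $\cO_S(nD)$ yields
$$0 \to \cO_S((n-1)D) \to \cO_S(nD) \to \cO_D(nD) \to 0.$$
On every component $C_i$ of the effective (possibly reducible and non-reduced) curve $D$, the restriction $\cO_D(nD)$ has degree $nD\cdot C_i > 0$, so it is ample on $D$; thus $H^1(D,\cO_D(nD)) = 0$ and $\cO_D(nD)$ is globally generated for $n \gg 0$. The long exact sequence then makes $H^1(S,(n-1)D)\to H^1(S,nD)$ surjective for large $n$; since these spaces are finite-dimensional, their dimensions stabilize, the maps become isomorphisms, and $H^0(S,nD)\to H^0(D,\cO_D(nD))$ becomes surjective. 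Lifting sections, every point of $\mathrm{supp}(D)$ carries a section of $nD$ not vanishing there, while for a point off $\mathrm{supp}(D)$ the $n$-th power of the section cutting out $D$ suffices; hence $|nD|$ is base-point free and defines a morphism $\phi\colon S \to \PP^N$.

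To conclude I would observe that $\phi$ contracts an irreducible curve $C$ only if $nD\cdot C = 0$, which is excluded by $D\cdot C > 0$; so $\phi$ has finite fibres, and being proper it is finite. Therefore $nD = \phi^*\cO(1)$ is the pullback of an ample bundle along a finite morphism, hence ample, and so $D$ is ample. I expect the bootstrap to be the main obstacle: one must verify that $\cO_D(nD)$ is genuinely ample on the possibly singular, non-reduced divisor $D$, control the stabilization of $h^1(S,nD)$, and pass correctly from global generation and section-lifting on $D$ to base-point-freeness on all of $S$. By contrast, the Riemann--Roch estimate, the existence of an ample $H$ with $D\cdot H > 0$, and the final ``finite pullback of ample is ample'' step are comparatively routine.
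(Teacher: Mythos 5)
Your proof is correct in outline, but there is nothing in the paper to compare it against: the paper does not prove this lemma at all. It is stated purely as a recollection of the classical Nakai--Moishezon criterion (note the phrasing ``the Nakai--Moishezon criterion states that\ldots''), and is used as a black box in the subsequent corollary to verify ampleness of divisors $aH-\sum b_iE_i$ on $\CP^2\#k\overline{\CP}{}^2$. What you have written is essentially the standard textbook proof (Hartshorne, \emph{Algebraic Geometry}, Thm.\ V.1.10): the easy direction via very ampleness and degrees; the converse by (i) producing $D\cdot H>0$ against an ample $H$, (ii) Riemann--Roch plus Serre duality and $D^2>0$ to make $nD$ effective, (iii) the restriction-sequence bootstrap $0\to\cO_S((n-1)D)\to\cO_S(nD)\to\cO_D(nD)\to 0$ with stabilization of $h^1$ to get base-point-freeness of $|nD|$, and (iv) finiteness of the induced morphism together with ``pullback of ample under a finite morphism is ample.'' The steps you flag as delicate are the genuinely delicate ones: ampleness of $\cO_D(nD)$ on the possibly non-reduced, reducible curve $D$ (positive degree on each component of the reduction suffices, but this needs the curve-case criterion on non-reduced schemes), and the surjectivity of $H^0(S,nD)\to H^0(D,\cO_D(nD))$ extracted from the stabilized $H^1$'s. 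None of these is a gap --- they are all standard --- but be aware that for the purposes of this paper the lemma is a citation, not a claim requiring proof, so your effort reproves a classical result rather than replaces an argument of the authors.
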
 

\begin{corollary}
Let $X=\CP^2\# k\overline{\CP}{}^2$ be the blow-up of the projective plane at $k$ points. A divisor $D=a H-\sum b_i E_i$ is
ample if $b_i>0$ and $a>\sum b_i$.
\end{corollary}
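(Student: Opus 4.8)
The plan is to reduce everything to the Nakai–Moishezon criterion stated in the preceding Lemma: to prove $D$ ample it suffices to check that $D^2>0$ and that $D\cdot C>0$ for every irreducible curve $C\subset X$. Throughout I view $X=\CP^2\#k\overline{\CP}{}^2$ as the blow-up $\pi:X\to\CP^2$ at $k$ distinct points $p_1,\dots,p_k$, so that the intersection form is diagonalized by $H^2=1$, $E_i^2=-1$, $H\cdot E_j=0$ and $E_i\cdot E_j=0$ for $i\neq j$.

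First I would dispose of the self-intersection. Writing $D=aH-\sum b_iE_i$ gives $D^2=a^2-\sum b_i^2$. Since all $b_i>0$ the cross terms are nonnegative, so $\sum b_i^2\le\bigl(\sum b_i\bigr)^2$, and the hypothesis $a>\sum b_i>0$ yields $\bigl(\sum b_i\bigr)^2<a^2$. Hence $D^2>0$, which is immediate.

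The substance lies in positivity against irreducible curves, and here I would argue by classifying $C$ according to its image $\pi(C)$. If $\pi(C)$ is a point, then $C$ is one of the exceptional curves $E_i$, and $D\cdot E_i=b_i>0$. Otherwise $\pi(C)=\bar C$ is an irreducible plane curve of some degree $d\ge 1$ and $C$ is its proper transform, so $[C]=dH-\sum m_iE_i$ with $m_i=\operatorname{mult}_{p_i}\bar C\ge 0$. Then $D\cdot C=ad-\sum b_im_i$, and the decisive input is the classical multiplicity bound $m_i\le d$: intersecting $\bar C$ with a generic line through $p_i$ and applying Bézout gives $d\ge\operatorname{mult}_{p_i}\bar C=m_i$. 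Combining $b_i>0$ with $m_i\le d$ then yields $\sum b_im_i\le d\sum b_i<da=ad$, so $D\cdot C>0$. With both inequalities verified, Nakai–Moishezon gives ampleness.

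The main obstacle is really only the curve-classification step together with the bound $m_i\le d$; once these are in place the two required inequalities fall out directly from $a>\sum b_i$. I would also flag the mild standing assumption that the centers $p_i$ are distinct points of $\CP^2$ (which is the case relevant here), so that the proper-transform description $[C]=dH-\sum m_iE_i$ and the inequality $m_i\le d$ hold as stated.
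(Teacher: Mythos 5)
Your proof is correct and follows essentially the same route as the paper's: both invoke the Nakai--Moishezon criterion, treat the exceptional curves separately, write a non-exceptional irreducible curve as $dH-\sum \alpha_i E_i$ with $d>0$, $\alpha_i\geq 0$, and use the multiplicity bound $\alpha_i\leq d$ to conclude $D\cdot C>0$ from $a>\sum b_i$. The only cosmetic differences are that you derive $\alpha_i\le d$ by B\'ezout in $\CP^2$ where the paper intersects with the class $H-E_i$ on the blow-up, and you verify $D^2>0$ explicitly, a step the paper leaves implicit.
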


\begin{proof}
Using Lemma \ref{lem:1} with $C=H$ and $C=E_i$, we get $a>0$, $b_i>0$. Now let  $C$ be any curve  which is not
an exceptional divisor. Then $C\equiv dH-\sum \alpha_i E_i$. Intersecting with $H$ and $E_i$ we get
$d>0$, $\alpha_i>0$. Intersecting with a line $L=H-E_i$, we get $d-\alpha_i\geq 0$.  Now
$D\cdot C=ad-\sum \alpha_i b_i>\sum b_i (d-\alpha_i)\geq 0$.
\end{proof}

\begin{theorem} \label{thm:torsionfree}
Let $M$ be a Smale-Barden manifold such that $H_2(M,\ZZ)$ has no torsion. Then $M$ always  admits  a regular 
Sasakian structure.
\end{theorem}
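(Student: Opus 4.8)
The plan is to combine the classification of torsion-free Smale-Barden manifolds with the regular case of Theorem \ref{thm:seifert}, realizing each such manifold as the total space of a circle bundle over a rational surface whose Euler class is a primitive K\"ahler class. First I would record which manifolds occur: by Theorem \ref{thm:sb-classification}, a simply connected $5$-manifold with $H_2(M,\ZZ)$ torsion-free and $b_2(M)=k$ is diffeomorphic either to $S^5$ (when $k=0$), to $\#k\,(S^2\times S^3)$ (the spin case, $i(M)=0$), or to $(S^2\widetilde\times S^3)\#(k-1)(S^2\times S^3)$ (the non-spin case, $i(M)=\infty$, which requires $k\geq 1$). Since these three families are distinguished precisely by the value of $k$ together with the spin type, it suffices to produce, for each $k$, a regular Sasakian structure on a $5$-manifold with the prescribed $b_2$ and spin type.

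By the discussion preceding Theorem \ref{thm:bundle-classif}, a regular Sasakian structure is the same datum as a circle bundle $M\to X$ over a K\"ahler surface $X$ whose Euler class $e=[\omega]$ is an integral K\"ahler (that is, ample) class. I would take $X=\CP^2\#k\,\overline{\CP}{}^2$, the blow-up of the projective plane at $k$ points, which is a simply connected projective surface with $b_2(X)=k+1$, and write $H^2(X,\ZZ)=\ZZ\la H,E_1,\ldots,E_k\ra$ with $H^2=1$, $E_i^2=-1$, $H\cdot E_i=0$. Its canonical class is $K_X=-3H+\sum_i E_i$, so $w_2(X)\equiv H+\sum_i E_i\pmod 2$, and in particular $w_2(X)\not\equiv 0$. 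For any primitive ample $e$, Theorem \ref{thm:bundle-classif} identifies the total space with $\#k\,(S^2\times S^3)$ when $M$ is spin and with $(S^2\widetilde\times S^3)\#(k-1)(S^2\times S^3)$ when $M$ is not spin. Thus the entire problem reduces to choosing $e$ simultaneously primitive, ample, and of the desired spin type.

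Here I would use the explicit one-parameter family $e=aH-\sum_{i=1}^k E_i$. By the Corollary to Lemma \ref{lem:1}, this class is ample as soon as $a>k$, and it is primitive since the coefficient $-1$ of $E_1$ forces the gcd of its coordinates to be $1$. Its mod $2$ reduction is $aH+\sum_i E_i$, so by Theorem \ref{thm:spin} (together with $w_2(X)\not\equiv 0$) the bundle is spin exactly when $e\equiv w_2(X)$, i.e.\ when $a$ is odd, and non-spin when $a$ is even. Choosing $a$ to be a large odd integer realizes the spin manifold $\#k\,(S^2\times S^3)$ with $i(M)=0$, while choosing $a$ to be a large even integer realizes the non-spin manifold with $i(M)=\infty$ for every $k\geq 1$; for $k=0$ one takes $X=\CP^2$ and $e=H$, recovering $S^5$. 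I expect the only genuine point of friction to be the tension between primitivity and the spin congruence---one cannot rescale $e$ to adjust its parity without destroying primitivity---and the family $aH-\sum_i E_i$ is precisely what reconciles ampleness, primitivity, and both parities at once, so that once it is in hand the conclusion follows immediately from Theorems \ref{thm:bundle-classif} and \ref{thm:spin}.
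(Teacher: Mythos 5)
Your proposal is correct and follows essentially the same route as the paper: the same surface $X=\CP^2\#k\,\overline{\CP}{}^2$, the same family of Euler classes $aH-\sum E_i$ (ample by the corollary to Lemma \ref{lem:1}, primitive since the $E_i$-coefficients are $\pm 1$), and the same use of Theorems \ref{thm:bundle-classif} and \ref{thm:spin} to control the diffeomorphism type via the parity of $a$. The paper simply fixes the concrete choices $a=2k+1$ (spin) and $a=2k$ (non-spin) instead of ``large odd/even $a$'', so the two arguments coincide.
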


\begin{proof}
Take $X=\CP^2\# k\overline{\CP}{}^2$, the blow-up of the projective plane at $k$ points. Then $b_2(X)=k+1$ and the homology
of $X$ is $H_2(X)=\ZZ\la H,E_1,\ldots, E_k\ra$, where $H$ is the hyperplane class and $E_i$ are the exceptional divisors. Any 
class $e=aH-\sum b_iE_i$ is ample (that is, defined by a Kähler cohomology class)
if $a>\sum b_i$, $b_i>0$ for all $i$. And it is primitive if $\gcd(a,b_i)=1$. The canonical class of $X$ is
$K_X=-3H+\sum E_i$, and $w_2(X)\equiv K_X \pmod2$. Hence $K_X\equiv e$ if and only if all $a,b_i$ are odd numbers.

Therefore, to get spin regular Sasakian manifolds with $b_2(M)=k$ and $H_2(M,\ZZ)=\ZZ^k$, take a circle bundle $M\to X$
with $e=(2k+1)H-\sum E_i$. To get a non-spin regular Sasakian manifold, take $M\to X$ with $e= 2k H-\sum E_i$.
\end{proof}

\begin{remark}
Theorem \ref{thm:torsionfree} implies that for $\bt=0$, the manifolds admitting 
a regular Sasakian structure are the same as the manifolds admitting a regular K-contact structure.
\end{remark}


\begin{thebibliography}{ABCDE}
\bibitem{B} D. Barden, {\it Simply connected five-manifolds}, Ann. Math. 82(1965), 365-385.
\bibitem{BG} C. Boyer, K. Galicki, {\it Sasakian Geometry}, Oxford University Press, Oxford, 2007.
\bibitem{BGK} C. Boyer, K. Galicki, J. Koll\'ar, {\it Einstein metrics on spheres}, Ann. Math. 162(2005), 557-580. 
\bibitem{CMRV} A. Ca\~nas, V. Mu\~noz, J.A. Rojo, A. Viruel, {\it A K-contact simply connected $5$-manifold without Sasakian structures}, ArXiv:1911.08901.

%\bibitem{FM} R. Friedman, J. Morgan, {\it Smooth 4-manifolds and complex surfaces}, Springer, 1994.
\bibitem{GH} P. Griffiths, J. Harris, {\it Principles of Algebraic Geometry}, John Wiley and Sons, New York, 1978
\bibitem{GS} R. Gompf, A. Stipsicz, {\it $4$-Manifolds and Kirby Calculus}, Graduate Studies in Mathematics, Amer. Math. Soc., 1999.
\bibitem{K} J. Koll\'ar, {\it Circle actions on simply connected $5$-manifolds,}, Topology 45 (2006), 643-672.
\bibitem{K2} J. Koll\'ar, {\it Einstein metrics on 5-dimensional Seifert bundles}, J. Geom. Anal. 15 (2005), 445-476
\bibitem{H} M. Hamilton, {\it On symplectic 4-manifolds and contact 5-manifolds}, Ph.D. Thesis, University of Munich, 2008.
\bibitem{Nori} M. Nori, {\it Zariski’s conjecture and related problems}, Annales sci.\ l’É.N.S. 4e série, 16 (1983), 305-344.
\bibitem{MRT} V. Mu\~noz, J.A. Rojo, A. Tralle, {\it Homology Smale-Barden manifolds with K-contact but not Sasakian structures}, Inter.\
Math.\ Res. Notices, doi.org/10.1093/imrn/rny205
\bibitem{R} R. Rukimbira, {\it Chern-Hamilton conjecture and K-contactness}, Houston J. Math. 2191995, 709-718.
\bibitem{S} S. Smale, {\it On the structure of 5-manifolds}, Ann. Math. 75(1962), 38-46.
\bibitem{Tau} C. Taubes, {\it The Seiberg-Witten and Gromov invariants}, Math. Res. Lett. 2 (1995), 221-238. 
%\bibitem{Zar} O. Zariski, {\it On the problem of existence of algebraic functions of two variables possesssing
%a given branch curve}, Amer. Jour. Math. 51 (1929), 305-328.


\end{thebibliography}
\end{document}